\documentclass[12pt]{article}
\nonstopmode
\RequirePackage[colorlinks,citecolor=blue,urlcolor=blue,linkcolor=blue]{hyperref}
\hypersetup{
colorlinks = true,
citecolor=blue,
urlcolor=blue,
linkcolor=blue,
pdfauthor = { Alexey Kuznetsov},
pdfkeywords = { discrete Fourier transform, orthogonal basis, eigenvectors, Hermite functions, q-binomial theorem},
pdftitle = {Explicit Hermite-type eigenvectors of the discrete Fourier transform},
pdfpagemode = UseNone
}
\usepackage{graphicx,xspace,colortbl}
\usepackage{amsmath,amsthm,amsfonts}
\usepackage[titletoc,title]{appendix}
\usepackage{color}
\usepackage{fancybox}
\usepackage{epsfig}
\usepackage{subfig}
\usepackage{pdfsync}
    \oddsidemargin -1.0cm
    \evensidemargin -1.0cm
    \topmargin -1.5cm
    \textwidth 18.7cm
    \textheight 23.5cm
    \def\qed{\hfill$\sqcap\kern-8.0pt\hbox{$\sqcup$}$\\}
    \def\beq{\begin{eqnarray}}
    \def\eeq{\end{eqnarray}}
    \def\beqq{\begin{eqnarray*}}
    \def\eeqq{\end{eqnarray*}}

    \def\r{{\mathbb R}}
    \def\c{{\mathbb C}}
    	
    \def\d{{\textnormal d}}
    \def\i{{\textnormal i}}

\newtheorem{theorem}{Theorem}
\newtheorem{lemma}{Lemma}

\newtheorem{corollary}{Corollary}
\theoremstyle{definition}
\newtheorem{definition}{Definition}

\newtheorem{remark}{Remark}


\title{Explicit Hermite-type eigenvectors of the discrete Fourier transform}
\author{
{Alexey Kuznetsov
\footnote{Dept. of Mathematics and Statistics,  York University,
4700 Keele Street, Toronto, ON, M3J 1P3, Canada.  \newline
E-mail:  kuznetsov@mathstat.yorku.ca  \newline Research supported by the Natural Sciences and Engineering Research Council of Canada. } 
 }}
 
 \date{\today}

\begin{document}
\maketitle

\begin{abstract} 
The search for a canonical set of eigenvectors of the discrete Fourier transform has been ongoing for more than three decades. 
The goal is to find an orthogonal basis of eigenvectors which would approximate Hermite functions -- the eigenfunctions of the continuous Fourier transform. This eigenbasis should also have some degree of analytical tractability and should allow for 
efficient numerical computations. In this paper we provide a partial solution to these problems. 
First, we construct an explicit basis of (non-orthogonal) eigenvectors of the discrete Fourier transform, thus extending the results of \cite{Kong_2008}. Applying the Gramm-Schmidt orthogonalization procedure we obtain an orthogonal eigenbasis of the discrete Fourier transform. We prove that the first eight eigenvectors converge to the corresponding Hermite functions, and we conjecture that this convergence result remains true for all eigenvectors. 
\end{abstract}

{\vskip 0.15cm}
 \noindent {\it Keywords}:  eigenvectors, discrete Fourier transform, orthogonal basis,  Hermite functions, q-Binomial Theorem\\
 \noindent {\it 2010 Mathematics Subject Classification }: Primary 42A38, Secondary 65T50

\section{Introduction}

The fractional Fourier transform is becoming increasingly more important due to an ever-growing list of applications in signal processing, optics and quantum mechanics and in other areas of Science \cite{bultheel2007,FRFT_book}. In order to define this object one needs first to diagonalize the Fourier transform operator ${\mathcal F}$, and that is where 
Hermite functions 
\begin{equation}\label{def_Hermite_psin}
\psi_n(x):=(-1)^n (\sqrt{\pi} 2^n n!)^{-1/2}  e^{x^2/2} \frac{\d^n}{\d x^n} e^{-x^2}, \;\;\; n\ge 0  
\end{equation}
become indispensable. It is well-known that Hermite functions are the eigenfunctions of the Fourier transform operator
$$
({\mathcal F} \psi_n)(x):=\frac{1}{\sqrt{2\pi}}\int_{\r} e^{-\i x y} \psi_n(y) \d y=(-\i)^n \psi_n(x),
$$
and that they form a complete orthonormal basis of $L_2(\r)$. In other words, the orthogonal basis of Hermite functions diagonalizes the Fourier transform operator ${\mathcal F}$ and this  allows us to define the fractional power $({\mathcal F})^a$ --
the fractional Fourier transform. Of course, there are infinitely many ways to choose an eigenbasis of the Fourier transform, each of them would lead to a different version of the fractional Fourier transform.  
What makes Hermite functions so special among all other eigenfunctions is that they are analytically tractable and they have many useful properties. This analytical tractability and Mehler's formula \cite{bultheel2007} (which gives an explicit expression for the integral kernel of the fractional Fourier transform) have sealed the standing of Hermite functions as the canonical set of the eigenfunctions for Fourier transform.

 \begin{table}
	\centering
	\begin{tabular}{| l || c | c | c | c |}
	\hline 	
	 	& dim$(W)$  & dim$(X)$  & dim$(Y)$ & dim$(Z)$    \\
	\hline \hline
	$N=4L$ &  $L+1$	  & $L$ & $L$ & $L-1$ \\ \hline
	$N=4L+1$ & 	 $L+1$ 	& $L$ & $L$ & $L$ \\ \hline
	$N=4L+2$ & 	 $L+1$ 	& $L$ & $L+1$ & $L$\\ \hline
	$N=4L+3$ &	 $L+1$	& $L+1$ & $L+1$ & $L$ \\ \hline
	\end{tabular}
	\caption{Dimensions of the eigenspaces $\{W, X, Y, Z\}$ of the discrete Fourier transform ${\mathcal F}_N$, corresponding to the eigenvalues
	$\{1, -\i, -1, \i\}$.}
	\label{tab_multiplicities}
\end{table}

Compared to the continuous Fourier transform, the situation with the eigenvectors of the discrete Fourier transform (DFT) is much more complex. Let us first introduce several necessary definitions and notations. 
The (centered) discrete Fourier transform ${\mathcal F}_N$ is a linear map that sends a vector 
${\mathbf u} \in \c^N$ into a vector 
${\mathbf v}
 \in \c^N$ according to  the rule
\begin{equation}\label{def_Fourier_transform}
v(l)= \frac{1}{\sqrt{N}}\sum\limits_{k \in I_N} e^{-2\pi \i kl/N} u(k), \;\;\; l\in I_N,
\end{equation} 
where $I_N$ denotes the set of integers 
$$I_N:=\{-M+1, -M+2, \dots , -M+N\}, \;\;\; M=\lfloor (N+1)/2 \rfloor.$$ 
Everywhere in this paper we will follow the convention that all vectors will be denoted by bold lower case letters and the elements of a vector will be labeled by the set $I_N$. Before we begin discussing the eigenvectors of the DFT, let us say a few words about its eigenvalues. The eigenvalues of both continuous and discrete Fourier transform are given by $\pm 1$, $\pm \i$. In the continuous case each eigenspace is infinite dimensional, whereas the multiplicities of eigenvalues in the discrete case
are presented in the Table \ref{tab_multiplicities}. This result was obtained in 1921 by  Schur, however it can be easily derived 
from a much earlier result of Gauss on the law of quadratic reciprocity, which is essentially a statement about the trace of the matrix corresponding to the linear transformation ${\mathcal F}_N$. The formula for the multiplicities of eigenvalues was rediscovered by McClellan and Parks 
\cite{McClellan1972} in 1972, and since then there have appeared many other proofs (see \cite{Murty2001} for a very simple proof based on the Vandermonde determinant formula).

Returning to our discussion of eigenvectors of the DFT, it seems that the first example of an eigenbasis was constructed by McClellan and Parks \cite{McClellan1972} as a 
by-product of their proof of the results in Table \ref{tab_multiplicities}. 
Other constructions of eigenvectors of the DFT include a number-theoretic construction \cite{Morton} and 
a representation theory approach \cite{Gurevich2009,Wang2010}. Mehta \cite{Mehta1987} has constructed an eigenbasis based on theta functions. One important approach for finding orthogonal eigenvectors of the DFT is based on certain symmetric tridiagonal (or almost tridiagonal) matrices which commute with the DFT. The eigenvectors of these matrices give an orthogonal eigenbasis of the DFT. Such matrices were first discovered by Dickinson and Steiglitz \cite{Dickinson} and by Grunbaum \cite{Grunbaum} (see also \cite{Candan,Clary_Mugler_2003,Santhanam2008} for more recent developments). 

The above discussion clearly shows that there is an abundance of different sets of eigenvectors of the DFT. How can we choose ``the best" among them? First we need to define what we mean by ``the best". An ideal set of eigenvectors of the discrete Fourier transform ${\mathcal F}_N$ should satisfy the following three requirements: the eigenvectors should be orthogonal, they should converge to the corresponding Hermite functions as $N\to +\infty$ and they should have at least some degree of analytical tractability (so that we are able to compute them).   
Judging by these standards, most of the above candidates can be disqualified. For example, the eigenbasis of McClellan and Parks is explicit but not orthogonal and it does not approximate Hermite functions; the eigenbasis of Mehta  \cite{Mehta1987} does approximate Hermite functions but it is not orthogonal; the eigenbasis constructed in \cite{Wang2010} is orthogonal, but it requires $N$
 to be a certain prime number and it does not seem to approximate Hermite functions.   In fact, the only sets of eigenvectors that do satisfy the above requirements come from the commuting symmetric matrix approach of Dickinson and Steiglitz \cite{Dickinson} and Grunbaum \cite{Grunbaum}, though in both cases the eigenvectors are not given explicitly and to find them one needs to diagonalize a tridiagonal symmetric matrix.

Not much is known about {\it explicit} Hermite-type eigenvectors of the DFT. In fact, it seems that the only result in this direction 
comes from the recent paper by Kong  \cite{Kong_2008}, who gives two examples of such eigenvectors.  
More precisely, when $N=4L+1$ the vector ${\mathbf u} \in \r^N$ with 
\begin{equation}\label{Kong_u}
u(k)=\prod\limits_{j=L+1}^{2L} \left( \cos(2\pi k/N)-\cos(2\pi j/N) \right), \;\;\; k \in I_N, 
\end{equation}
is an eigenvector of ${\mathcal F}_N$, and the same is true when $N=4L$ for the vector ${\mathbf v} \in \r^N$ with 
\begin{equation}\label{Kong_v}
v(k)=\sin(2\pi k/N)\prod\limits_{j=L+1}^{2L-1} \left( \cos(2\pi k/N)-\cos(2\pi j/N) \right), \;\;\; k \in I_N.
\end{equation}
 Numerical evidence given in \cite{Kong_2008} suggests that these two vectors approximate the first two Hermite functions. What is also interesting about these two eigenvectors is that they are unique in a certain sense. It follows from the proof of the main result in \cite{Kong_2008} (though it is not explicitly stated there) that the only eigenvector of the DFT of the dimension
 $N=4L+1$ which has zero elements $u(k)=0$ for $L+1 \le |k| \le 2L$ is the vector ${\mathbf u}$ defined in \eqref{Kong_u}.  

In this paper we provide the first known example of an explicit orthogonal eigenbasis of the DFT for which the individual eigenvectors 
seem to approximate the corresponding Hermite functions. First of all, we construct a basis of ${\mathbb R}^N$ which consists of 
even and odd vectors similar to Kong's vectors ${\mathbf u}$ and ${\mathbf v}$ given above. These vectors behave nicely under the DFT and we prove that they are unique in a certain sense. The proof of these results uses the methods of Kong \cite{Kong_2008} combined with the $q$-Binomial Theorem. From this basis of ${\mathbb R}^N$ we construct an eigenbasis of the DFT following the same procedure as 
McClellan and Parks \cite{McClellan1972}. As a corollary of this construction, we obtain a new proof of the result on the multiplicities of the eigenvalues of the DFT presented in Table \ref{tab_multiplicities}. Finally, we apply the Gramm-Schmidt orthogonalization procedure to the basis
of each eigenspace of the DFT. This gives us an orthonormal basis of ${\mathbb R}^N$, consisting of the eigenvectors of the 
discrete Fourier transform ${\mathcal F}_N$. 
We prove that the first eight eigenvectors converge to the corresponding Hermite functions
 as $N\to +\infty$ and we conjecture that this convergence holds true for all eigenvectors.

The paper is organized as follows. In Section \ref{section_results} we present our results for the case 
$N\equiv 1(\textnormal{mod } 4)$. The proofs of these results are collected in Section \ref{section_proofs} and 
in the Appendix we give the corresponding formulas for all remaining cases $N \in \{0,2,3\} (\textnormal{mod } 4)$.


\section{Results}\label{section_results}


We begin by stating some definitions and notations that will be used throughout this paper.
We denote by $W,X,Y,Z \subset \r^N$ the eigenspaces of the DFT corresponding to eigenvalues $1,-\i,-1,\i$ 
(their dimensions are given in Table \ref{tab_multiplicities}).  
The dot product of two vectors ${\mathbf u}$ and ${\mathbf v}$ will be denoted by ${\mathbf u} \cdot {\mathbf v}$ and the Euclidean norm of a vector in ${\mathbb C}^N$ is defined as $\|u\|^2=u \cdot  \bar u$. We denote by $\lfloor x \rfloor$ and $\lceil x \rceil$  the floor and the 
ceiling function. From now on we will drop the subscript $N$ in the notation of the discrete Fourier transform 
and will write simply ${\mathcal F}$ instead of ${\mathcal F}_N$ -- there should be no confusion as the continuous Fourier transform will not be used anymore. 

We say that a vector ${\mathbf a} \in {\mathbb R}^N$ is {\it even} ({\it odd}) if $a(k)=a(j)$ (respectively $a(k)=-a(j)$) for all $k,j\in I_N$ 
such that $k\equiv -j \; (\textnormal{mod }N )$. An equivalent definition can be given as follows: when $N=2M+1$ the even (odd) vectors satisfy the condition $a(k)=a(-k)$ (respectively, $a(k)=-a(k)$) for $-M \le k \le M$. When $N=2M$ the even vectors satisfy
$a(k)=a(-k)$ for $-M+1 \le k \le M-1$ (note that there is no restriction on $u(M)$) and the the odd vectors satisfy
$a(k)=-a(-k)$ for $-M+1 \le k \le M-1$ and $a(M)=0$. 
The following two properties are well known and will be used extensively later: if ${\mathbf u} \in \r^N$ is an even vector 
and ${\mathbf v}\in \r^N$ is an odd vector, then
\begin{itemize}
\item[(i)] ${\mathcal F}{\mathbf u}$ ($\i {\mathcal F}{\mathbf v}$)  is a real even (respectively, real odd) vector;
\item[(i)] ${\mathcal F}^2 {\mathbf u}={\mathbf u}$  and ${\mathcal F}^2 {\mathbf v}=-{\mathbf v}$;
\end{itemize}

The following object will appear frequently in this paper: the sequence  $\{S(k)\}_{k\ge 0}$ is defined by 
\begin{equation}\label{def_sk}
S(k):=\prod\limits_{j=1}^k 2 \sin(\pi j/N) \;\;\; \textnormal{ for } \; k\ge 1, 
\end{equation}
and $S(0):=0$. 
This sequence satisfies 
\begin{align}
&\;\textnormal{(i)} \;\; S(k) =0 \;\; \textnormal{ for } \; k\ge N; \label{sk_properties1}\\ 
&\textnormal{(ii)}  \;\; S(k) S(N-k-1)=N \;\; \textnormal{ for } 0\le k \le N-1. \label{sk_properties2} 
\end{align}
Property \eqref{sk_properties1} follows at once from the definition \eqref{def_sk} while property \eqref{sk_properties2} 
can be derived by writing the polynomial $P(w)=(1-w^N)/(1-w)$ as a product of $N-1$ linear factors and then taking the limit as $w\to 1$.

We denote by  ${\textnormal{supp}}({\textbf b})$ {\it the support} of a vector ${\mathbf b}$, which is the set of all indices $k \in I_N$ such that $b(k)\neq0$. Following Kong \cite{Kong_2008} we define  {\it the signal length} (or simply, {\it the length}) 
of a vector ${\textbf b}$ as 
$$
l({\mathbf b}):=\max\{i-j \; : \; i,j \in {\textnormal{supp}}({\textbf b})\}. 
$$
We record the following important relationship between the length of a vector and its DFT: 
For any nonzero vectors ${\mathbf b} \in \c^N$   we have
\begin{equation}\label{Kong_Thm1}
l({\mathbf b})+l({\mathcal F}{\mathbf b})\ge N+1.
\end{equation}
 This result (which reminds one of the uncertainty principle) follows from Theorem 1 in \cite{Kong_2008}. We include the proof of this result in Section \ref{section_proofs} for convenience of the reader. 

The next theorem is our first main result. It gives the exact number of even/odd vectors that are ``extreme" in the sense that the quantity $l({\mathbf b})+l({\mathcal F}{\mathbf b})$ is the smallest possible among all even/odd vectors.

\newpage

\begin{theorem}\label{thm_support} ${}$ Assume that $N\ge 3$. 
\begin{itemize}
\item[(i)] If $N=2M+1$ there exist only $M+1$ nonzero even vectors satisfying
$l( {\mathbf u})+l({\mathcal F}{\mathbf u})=N+1$ and only $M$ nonzero odd vectors such that 
$l( {\mathbf v})+l({\mathcal F}{\mathbf v}) \le N+3$. 
\item[(ii)] If $N=2M$ there exist only $M-1$ nonzero even vectors satisfying $u(M)=({\mathcal F}{\mathbf u})(M)=0$ and
$l( {\mathbf u})+l({\mathcal F}{\mathbf u})\le N+2$ and only $M-1$ nonzero odd vectors such that
$l( {\mathbf v})+l({\mathcal F}{\mathbf v}) \le N+2$.  
\end{itemize}
\end{theorem}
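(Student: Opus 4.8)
The plan is to convert the length condition into a root–counting problem for a trigonometric polynomial and then use distinctness of the nodes to pin down uniqueness. (Throughout, extreme vectors are counted up to a nonzero scalar, since the quantity $l(\mathbf b)+l(\mathcal F\mathbf b)$ is scale invariant.) First I would take $N=2M+1$ and let $\mathbf u$ be a nonzero even vector whose support has largest index $p$, so that $u(\pm p)\neq0$ and the support lies in $\{-p,\dots,p\}$. Evenness lets me write the transform as a cosine sum,
\begin{equation*}
(\mathcal F\mathbf u)(l)=\frac{1}{\sqrt N}\Big(u(0)+2\sum_{k=1}^{p}u(k)\cos(2\pi k l/N)\Big)=\frac{1}{\sqrt N}\,Q\big(\cos(2\pi l/N)\big),
\end{equation*}
where, since $\cos(2\pi kl/N)=T_k(\cos(2\pi l/N))$ for the Chebyshev polynomials $T_k$, the function $Q$ is a real polynomial of degree exactly $p$ (its leading coefficient is a nonzero multiple of $u(p)$). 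The numbers $\cos(2\pi l/N)$, $l=0,1,\dots,M$, are $M+1$ distinct reals, and $l,-l$ give the same value, so $(\mathcal F\mathbf u)(\pm l)=0$ exactly when $Q$ vanishes at the corresponding node. As $\deg Q=p$, the vector $\mathcal F\mathbf u$ vanishes at no more than $p$ symmetric index pairs, so its largest support index $q$ satisfies $q\ge M-p$; this is precisely the bound \eqref{Kong_Thm1} specialised to even vectors, and the extreme vectors are exactly those attaining $q=M-p$.

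To count them I would fix $p$ and observe that $q=M-p$ forces $Q$ to vanish at the $p$ outermost nodes $\cos(2\pi l/N)$, $l=M-p+1,\dots,M$. Since these are $p$ distinct points and $\deg Q=p$, such a $Q$ equals $c\prod_{l=M-p+1}^{M}\big(x-\cos(2\pi l/N)\big)$ and is determined up to the scalar $c$; equivalently, the linear system imposed on $(u(0),\dots,u(p))$ has a Chebyshev--Vandermonde coefficient matrix of full rank $p$, hence a one–dimensional solution space, and the solution automatically has $u(p)\neq0$ (otherwise $Q$ would vanish at $p$ points while $\deg Q<p$, forcing $\mathbf u=\mathbf 0$). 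Letting $p$ run through $0,1,\dots,M$ produces exactly $M+1$ extreme even vectors (the endpoints being the vector supported at the origin and the constant vector), and the argument shows every extreme even vector arises this way. The odd case is identical after writing $(\mathcal F\mathbf v)(l)=\tfrac{-2\i}{\sqrt N}\sin(2\pi l/N)\,\widetilde Q(\cos(2\pi l/N))$ with $\widetilde Q$ of degree $p-1$ (via $\sin(k\theta)=\sin\theta\,U_{k-1}(\cos\theta)$). The loss of one degree forces $q\ge M-p+1$, so the minimal value of $l(\mathbf v)+l(\mathcal F\mathbf v)$ is $N+3$ rather than $N+1$, and the same Vandermonde argument yields one vector for each $p=1,\dots,M$, i.e. $M$ odd vectors.

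For part (ii) with $N=2M$ I would rerun the scheme, isolating the self–paired Nyquist index $M$ (where $-M\equiv M\pmod{N}$). The nodes are now $\cos(\pi l/M)$, $l=0,\dots,M$, with $l=M$ giving $x=-1$, and for an even vector the coordinate $u(M)$ is unpaired and otherwise free. Imposing $u(M)=0$ confines the support to $\{-(M-1),\dots,M-1\}$, so that $p\le M-1$, while the second condition $(\mathcal F\mathbf u)(M)=Q(-1)=0$ rules out the degenerate case $p=0$ (for which $\mathbf u$ is supported at the origin and $\mathcal F\mathbf u$ is constant). For each remaining $p\in\{1,\dots,M-1\}$ the minimal transform index is $q=M-p$, the minimal sum is $N+2$, and the root count forces $Q$ to vanish at the $p$ outermost nodes (one of which is $x=-1$), giving a single one–dimensional family, hence $M-1$ extreme even vectors. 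For odd vectors the definition already imposes $v(M)=0$, and the factor $\sin(\pi l/M)$ makes $(\mathcal F\mathbf v)(M)=0$ automatic, so the same argument over $p=1,\dots,M-1$ gives $M-1$ vectors. Explicit representatives in each family can be written using the products $S(k)$ from \eqref{def_sk}, and verifying that such an explicit vector has the prescribed short transform is where the $q$-Binomial Theorem enters; the counting statement itself needs only the root count above.

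I expect the main obstacle to be the even-$N$ case: the Nyquist index breaks the clean $\pm l$ pairing used when $N$ is odd, so one must recount the distinct nodes carefully and verify that the two boundary conditions $u(M)=(\mathcal F\mathbf u)(M)=0$ are exactly the constraints needed to restore a Vandermonde system of the correct size, neither over– nor under–determined. A secondary point, to be checked in all four cases, is that the unique solution really has largest support index equal to $p$ (equivalently that $Q$ has degree exactly $p$), so that distinct values of $p$ give genuinely different, linearly independent vectors and nothing is double–counted; this reduces to the nonvanishing of a Chebyshev leading coefficient together with the distinctness of the nodes.
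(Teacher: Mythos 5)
Your proposal is correct, and at its core it is the same argument as the paper's: the paper attaches to $\mathbf a$ the Laurent polynomial $R({\mathbf a};z)=\sum_{k\in I_N}a(k)z^k$, notes that $z^{-j}R({\mathbf a};z)$ is a polynomial with exactly $l({\mathbf a})-1$ zeros in $\c\setminus\{0\}$, observes that short transform length forces $R$ to vanish at prescribed powers of $\zeta=e^{-2\pi\i/N}$, and concludes that the forced zeros exhaust the degree and pin down $R$ (hence $\mathbf a$) up to a scalar, one vector per admissible length — exactly your interpolation/Vandermonde uniqueness, one $p$ at a time. Your Chebyshev substitution $x=\cos(2\pi l/N)$ is the symmetrization of the paper's $R$ under $z\mapsto 1/z$ (legitimate precisely because the vectors are even or odd), and it halves all degrees; the one place this genuinely diverges from the paper is part (ii). There the paper must extract extra information from the palindromic symmetry $R({\mathbf a};z)=R({\mathbf a};1/z)$, namely $\frac{\d}{\d z}R({\mathbf a};z)\vert_{z=\pm1}=0$, to argue that the zero of $R$ at $z=-1$ (coming from $b(M)=0$, $\zeta^M=-1$) has multiplicity two, which is needed to reach the full count of $2m$ zeros; in your folded variable this bookkeeping disappears, since $x+1=(z+1/z)/2+1$ vanishes to first order exactly when $(z+1)^2$ does, so the Nyquist condition is just one simple root of $Q$ at $x=-1$ and the count closes automatically. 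Your parity-free inequalities ($q\ge M-p$ for even vectors, $q\ge M-p+1$ for odd, due to the lost degree in $\widetilde Q$) also cleanly reproduce the paper's side remarks that $N+1$ is unattainable for odd vectors and $N+1$ unattainable in the even-$N$ case. The two checks you flag at the end (exact degree $p$ via the nonvanishing Chebyshev leading coefficient $2^p u(p)$, and the node recount at the self-paired index $M$) are indeed the only delicate points, and your argument as sketched already handles both.
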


The results and definitions that we have presented so far are valid for all integer values of $N \ge 1$. 
From now on we will concentrate on the case when
$N\equiv 1 \, ({\textnormal{mod }}4)$. The corresponding results for all remaining cases 
$N\in \{0,2,3\} \, ({\textnormal{mod }}4)$ are collected in the Appendix.

\subsection{The case when $N=4L+1$}

Our next goal is to find explicit expression for the vectors described in Theorem  \ref{thm_support} and to compute their discrete Fourier transform. 

\begin{theorem}\label{theorem_4N_plus1_n1}
${}$
\begin{itemize}
\item[(i)] The even vectors $\{{\mathbf u}_n\}_{-L \le n \le L}$ described in Theorem \ref{thm_support}(i) are given by 
\begin{equation}\label{def_un}
u_n(k):= S(3L+n+k)S(3L+n-k) \;\;\;\textnormal{ for } -L \le n \le L \; \textnormal{ and } \; k \in I_N. 
\end{equation}
These vectors are linearly independent and they satisfy
\begin{equation}\label{eqn_Fu_n}
{\mathcal F} {\mathbf u}_n=N^{-1/2}S(2L+2n) {\mathbf u}_{-n}.
\end{equation}
\item[(ii)] The odd vectors $\{{\mathbf v}_n\}_{-L \le n \le L-1}$ described in Theorem \ref{thm_support}(i)
are given by
\begin{equation}\label{def_vn}
v_n(k):=\sin(2\pi k/N) S(3L+n+k)S(3L+n-k)\;\;\;\textnormal{ for } -L \le n \le L-1 \; \textnormal{ and } \; k \in I_N. 
\end{equation}
These vectors are linearly independent and they satisfy
\begin{equation}\label{eqn_Fv_n}
{\mathcal F} {\mathbf v}_n=-\i N^{-1/2} S(2L+2n+1) {\mathbf v}_{-n-1}. 
\end{equation}
\end{itemize}
\end{theorem}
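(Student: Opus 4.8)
\medskip
\noindent\emph{Sketch of the intended proof.}

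The plan is to treat the transform identities \eqref{eqn_Fu_n} and \eqref{eqn_Fv_n} as the heart of the theorem and to derive the remaining assertions as consequences. That ${\mathbf u}_n$ is even and ${\mathbf v}_n$ is odd is immediate from the symmetry $k\mapsto -k$ of the products in \eqref{def_un}, \eqref{def_vn} and the oddness of $\sin(2\pi k/N)$. Using that $S(m)=0$ for $m\ge N$ one checks that $u_n(k)\ne 0$ exactly on the interval $n-L\le k\le L-n$, so ${\textnormal{supp}}({\mathbf u}_n)$ is a symmetric interval and $l({\mathbf u}_n)=2L-2n$; the odd vectors have the same support with $k=0$ deleted, and the same length. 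These lengths are distinct and the supports are nested, so $\{{\mathbf u}_n\}$ (respectively $\{{\mathbf v}_n\}$) is triangular with respect to the extreme coordinate $k=L-n$ (which only the members with index $\le n$ can occupy) and is therefore linearly independent. Once \eqref{eqn_Fu_n} is proved, ${\mathcal F}{\mathbf u}_n$ is a multiple of ${\mathbf u}_{-n}$ of length $2L+2n$, so $l({\mathbf u}_n)+l({\mathcal F}{\mathbf u}_n)$ attains the extremal value; counting the $2L+1=M+1$ indices $n$ then identifies these vectors with those in Theorem \ref{thm_support}(i).

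For the transform itself I would pass to $q$-series with $q=e^{2\pi\i/N}$ and $\zeta=e^{\i\pi/N}=q^{1/2}$. The relation $2\sin(\pi j/N)=\i\zeta^{-j}(1-q^j)$ gives $S(m)=\i^{m}\zeta^{-m(m+1)/2}(q;q)_m$, where $(q;q)_m=\prod_{j=1}^m(1-q^j)$. The crucial preparatory move is to rewrite $u_n$ in reciprocal form: applying \eqref{sk_properties2} to each factor, $u_n(k)=N^2/\bigl(S(b+k)S(b-k)\bigr)$ on its support, with $b:=L-n$; since $2b\le N-1$ the relevant $q$-factorial $(q;q)_{2b}$ does \emph{not} vanish, so the resulting sum stays non-degenerate at the root of unity. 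Substituting the $q$-Pochhammer expression, collecting the Gaussian powers $\zeta^{\pm k^2}=q^{\pm k^2/2}$, and setting $m=b+k$ turns the transform into a terminating Gaussian-binomial sum,
\[
({\mathcal F}{\mathbf u}_n)(l)=\frac{N^{3/2}(-1)^{b}\,\zeta^{\,b^2+b}\,q^{\,b^2/2+bl}}{(q;q)_{2b}}\sum_{m=0}^{2b}\binom{2b}{m}_{q} q^{\binom{m}{2}}\,q^{\,m(1/2-b-l)}.
\]

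Now I would apply the $q$-Binomial Theorem $\sum_{m=0}^{M}\binom{M}{m}_{q} q^{\binom{m}{2}}x^{m}=\prod_{j=0}^{M-1}(1+q^{j}x)$ with $M=2b$ and $x=q^{1/2-b-l}$, evaluating the sum as a product of $2b$ factors $1+q^{1/2+j-l}$. Writing $1+e^{\i\theta}=2\cos(\theta/2)e^{\i\theta/2}$ converts each factor into $2\cos\!\bigl(\pi(2j+1-2l)/(2N)\bigr)$ times a phase; the phases telescope to $q^{-bl}$, which cancels the $q^{bl}$ in the prefactor, leaving a product of $2b$ consecutive values of $g(p):=2\cos\!\bigl(\pi(2p+1)/(2N)\bigr)$. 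Since $2\sin(\pi j/N)=g(2L-j)$, every $S$-value is itself a product of consecutive $g$-values, so the quasi-periodicity $g(p+N)=-g(p)$ and the single zero $g(2L)=0$ let me realign the ranges, read off the support $|l|\le L+n$ (the product vanishes exactly when its index range meets a point $\equiv 2L\ (\mathrm{mod}\ N)$), and identify the result with $S(2L+2n)\,u_{-n}(l)$ up to the constant $N^{-1/2}S(2L+2n)$. Two checks guide the bookkeeping: \eqref{sk_properties2} gives $N^{-1}S(2L+2n)S(2L-2n)=1$, matching ${\mathcal F}^2{\mathbf u}_n={\mathbf u}_n$, and the boundary case $n=L$ (where ${\mathbf u}_L$ is a multiple of the unit vector and ${\mathbf u}_{-L}$ a constant vector) fixes the normalization. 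The odd identity \eqref{eqn_Fv_n} then follows from the even one: because $v_n(k)=\sin(2\pi k/N)u_n(k)$, the modulation rule $({\mathcal F}[\sin(2\pi\,\cdot/N){\mathbf u}_n])(l)=\tfrac{1}{2\i}[({\mathcal F}{\mathbf u}_n)(l-1)-({\mathcal F}{\mathbf u}_n)(l+1)]$ combined with the two-term contiguous relation $u_{-n}(l-1)-u_{-n}(l+1)=-4\sin\!\bigl(\pi(6L-2n+1)/N\bigr)\,v_{-n-1}(l)$ (a product-to-sum computation) produces exactly $-\i N^{-1/2}S(2L+2n+1){\mathbf v}_{-n-1}$, using $S(2L+2n+1)=2\sin(\pi(2L+2n+1)/N)S(2L+2n)$ and $\sin(\pi(2L+2n+1)/N)=\sin(\pi(2L-2n)/N)$.

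The main obstacle I anticipate is the third paragraph: applying the $q$-Binomial Theorem at the root of unity $q=e^{2\pi\i/N}$, where most $q$-factorials degenerate, and then showing that the emerging trigonometric product collapses to the target $S$-product with the correct constant \emph{and} the correct support. The reciprocal form is what keeps the sum non-degenerate (only $(q;q)_{2b}$ with $2b\le N-1$ appears), but the careful tracking of the accumulated phases, signs, and the factors $\zeta^{b^2+b}q^{b^2/2}$ through the $g$-value reassembly is where the real work lies; the symmetry relations for the sine products, above all \eqref{sk_properties2} together with $S(m)=0$ for $m\ge N$, are the tools that make the root-of-unity algebra close.
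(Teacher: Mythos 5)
Your proposal is correct in outline and rests on the same engine as the paper -- the terminating $q$-binomial theorem at the root of unity $q=e^{2\pi \i/N}$, the dictionary $S(m)=\i^{m}\zeta^{-m(m+1)/2}(q;q)_m$, and the reflection identity \eqref{sk_properties2} -- but you run that engine in the opposite direction. The paper first proves a standalone summation formula (Lemma \ref{main_lemma}) by expanding $(q^{k+1};q)_n=(q;q)_{n+k}/(q;q)_k$ via the $q$-binomial theorem, recognizing the result as an inverse DFT, and applying Fourier inversion; the identity \eqref{eqn_Fu_n} then drops out by a pure substitution of parameters, and the same lemma serves all four residue classes of $N$ in the Appendix. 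You instead attack $\mathcal F{\mathbf u}_n$ head-on: the reciprocal form $u_n(k)=N^2/\bigl(S(b+k)S(b-k)\bigr)$ with $b=L-n$ converts the transform into a terminating Gauss sum, which the product form of the $q$-binomial theorem evaluates in closed form. I verified your displayed reduction and the telescoping of the phases to $q^{-bl}$; both are right, the prefactor collapses to $N^{3/2}/S(2b)$, and the final matching with $N^{-1/2}S(2L+2n)u_{-n}(l)$ closes via $S(2L+l-b)\,S(2L-l+b)=N$. What the paper's route buys is that Fourier inversion does the support bookkeeping automatically and the lemma is proved once for all $N$; your route avoids inversion but pays in exactly the trigonometric realignment you flag yourself -- that step does work (the quasi-periodicity $g(p+N)=-g(p)$ and the zero at $p\equiv 2L$ suffice), but it is the bulk of the remaining labor. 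Your odd case is the paper's argument rearranged rather than a new one: your contiguous relation is \eqref{vn_k1_minus_k2} with $n\mapsto -n$ in disguise, since $\sin(\pi(6L-2n+1)/N)=-\sin(\pi(2L-2n)/N)$, and your constants reproduce \eqref{eqn_Fv_n} exactly; the linear-independence argument via the extreme coordinate $k=L-n$ is the paper's verbatim.

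Two small bookkeeping notes. First, your reciprocal form and your support claims silently use $S(0)=1$ (the empty product); this is in fact what \eqref{sk_properties2} at $k=0$ forces, despite the paper's misprint $S(0):=0$, so you are consistent with the paper's actual usage, but say so explicitly. Second, your lengths $l({\mathbf u}_n)=2L-2n$ follow the paper's literal definition of $l$, whereas the paper's usage throughout (e.g.\ $l({\mathbf u}_n)=2L-2n+1$ and the extremal value $N+1$ in Theorem \ref{thm_support}) counts the number of lattice points spanned, i.e.\ $\max\{i-j\}+1$; align your count with the latter before identifying your $M+1=2L+1$ even vectors with those of Theorem \ref{thm_support}(i), as otherwise your claimed extremal sum reads $N-1$ rather than $N+1$.
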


\begin{figure}
\centering
\subfloat[][]{\label{L10_f1}\includegraphics[height =6cm]{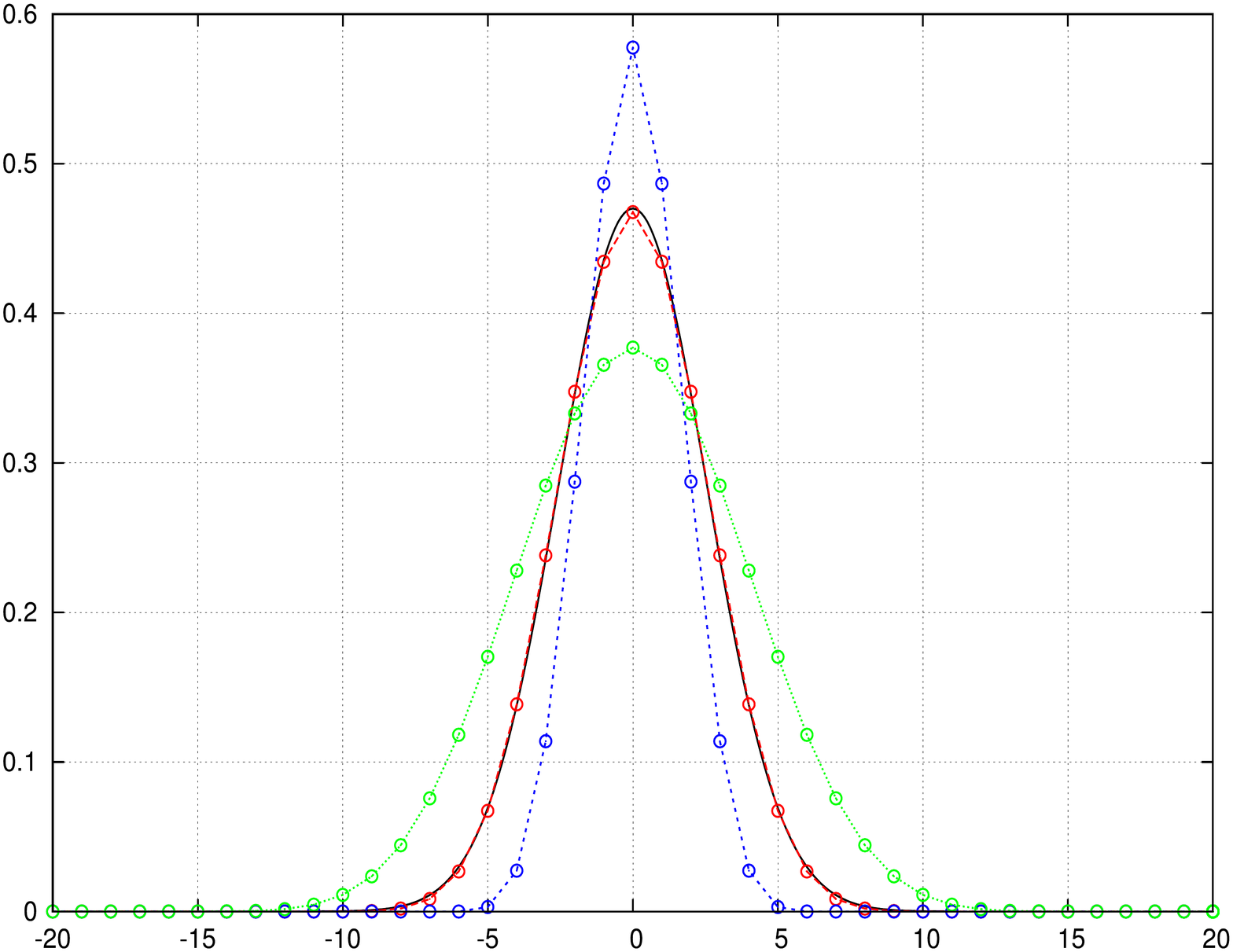}} \qquad
\subfloat[][]{\label{L10_f2}\includegraphics[height =6cm]{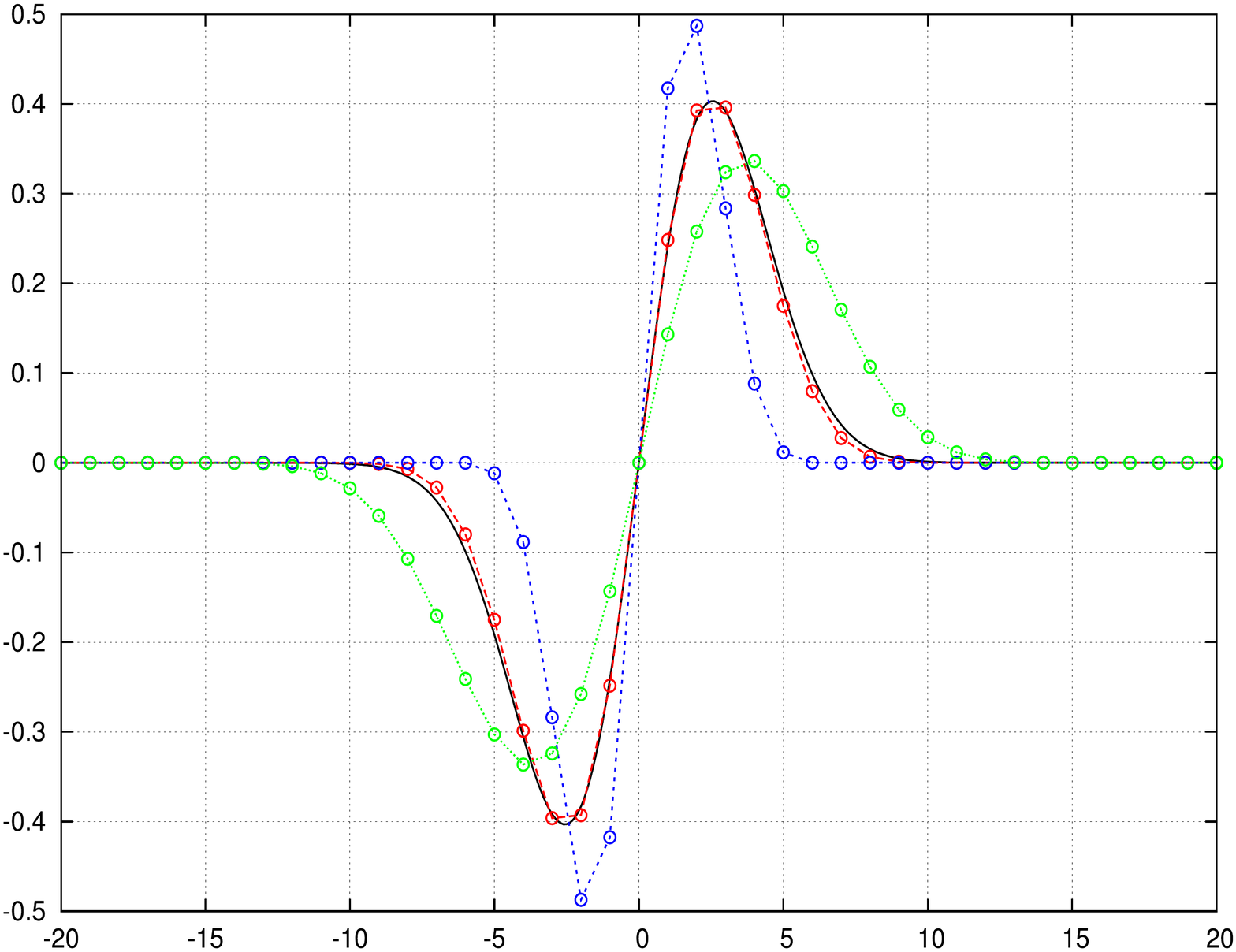}} 
\caption{
The red (respectively green, blue)  circles correspond to  (a) ${u}_n(k)/\|\mathbf u_n\|$ (b) ${v}_n(k)/\|\mathbf v_n\|$ for 
$n=0$ (respectively $n=-5$, $n=5$) and the black curve is the graph of the function (a) 
$x\mapsto (2/N)^{1/4} \exp(-\pi x^2/N)$ and (b)
$x\mapsto 2^{5/4}\pi^{1/2} N^{-3/4} x\exp(-\pi x^2/N)$. Here $N=41$ and the $x$-axis represents $k \in I_N$. 
}
\label{fig_L40}
\end{figure}

\begin{remark} Using the sum-to-product formula for the cosine function and identity \eqref{sk_properties2} it is easy to check that
formula \eqref{def_un} is equivalent to
\begin{equation}\label{u_n_product_formula}
u_n(k)=N2^{L+n} \prod\limits_{j=L-n+1}^{2L} \left( \cos(2\pi k/N)-\cos(2\pi j/N)\right). 
\end{equation}
In particular, the vector ${\mathbf u}_0$ is a scalar multiple of the vector ${\mathbf u}$ discovered by 
Kong \cite{Kong_2008} (see \eqref{Kong_u}). 
\end{remark}

Let us highlight some important properties of the vectors ${\mathbf u}_n$ and ${\mathbf v}_n$.
First of all, 
property \eqref{sk_properties1} implies that
\begin{equation}\label{support_uv}
{\textnormal{supp}}({\textbf u}_n)=\{k \in {\mathbb Z} \; : |k| \le L-n\}
\;\; {\textnormal{ and   }}\;\; {\textnormal{supp}}({\textbf v}_n)={\textnormal{supp}}({\textbf u}_n) \setminus\{0\}.
\end{equation}
In particular, we have $l({\textbf u_n})=l({\textbf v_n})=2L-2n+1$ and it is clear that 
the  vectors ${\mathbf u}_n$ and ${\mathbf v}_n$ satisfy 
$$
l({\mathbf u}_n)+l({\mathcal F}{\mathbf u}_n)=N+1, \;\;\; 
l({\mathbf v}_n)+l({\mathcal F}{\mathbf v}_n)=N+3,
$$ 
so these are precisely the vectors described in Theorem \ref{thm_support}.
The vectors ${\mathbf u}_n$ and ${\mathbf v}_n$ should be viewed 
as the discrete counterparts of the Gaussian functions $\exp(-a x^2)$
and $x\exp(-ax^2)$ (see Figure \ref{fig_L40}).  
There are indeed many analogies. First of all,  the elements of these vectors, $u_0(k)$ and $v_0(k)$, 
 converge to the corresponding Gausian functions as $N\to +\infty$ (this follows from Lemma \ref{lemma_v0} in Section
 \ref{section_proofs}). 
Second, the identities \eqref{eqn_Fu_n} and \eqref{eqn_Fv_n} are discrete analogues of the integral formulas
$$
 \int_{\r} e^{- a x^2} \times e^{-\i z x } \d x=\sqrt{\pi/a} \times e^{-z^2/(4a)}, 
 \;\;\;
 \int_{\r} x e^{- a x^2} \times e^{-\i z x}  \d x=-\i \sqrt{\pi/(4a^3)} \times z e^{-z^2/(4a)}. 
$$
Finally, the vectors ${\mathbf u}_n$ and ${\mathbf v}_n$ are connected by the following identity
\begin{equation}\label{vn_k1_minus_k2}
u_n(k+1)-u_n(k-1)=- 4\sin(\pi(2L+2n)/N)v_{n-1}(k), \;\;\; 
{\textnormal { for $-L<n\le L$ and $-2L < k < 2L$}},
\end{equation}
which we will establish in Section \ref{section_proofs}. This 
result is the finite difference  counterpart of the formula
$$
\frac{\d}{\d x} e^{-a x^2} = -2 a x e^{-a x^2}. 
$$

Our next goal is to construct a basis for each eigenspace $W$, $X$, $Y$ and $Z$ of the DFT.  
This construction is based on Theorem \ref{theorem_4N_plus1_n1} and the following property: if ${\mathbf u}$
 is an even vector  (${\mathbf v}$ is an odd vector) then 
${\mathcal F} {\mathbf u} \pm {\mathbf u}$ 
(respectively, $\i {\mathcal F} {\mathbf v} \pm {\mathbf v}$) is an eigenvector 
of the discrete Fourier transform ${\mathcal F}$ with the corresponding eigenvalue $\pm 1$ (respectively, $\mp \i$). This property
 can be easily established using 
the fact that ${\mathcal F}^2 {\mathbf u}={\mathbf u}$ and ${\mathcal F}^2 {\mathbf v}=-{\mathbf v}$ for every even vector ${\mathbf u}$ and 
every odd vector ${\mathbf v}$.

\begin{corollary}\label{prop_4N_plus1_n2}
Define $N$ vectors $\{{\mathbf w}_n\}_{0\le n \le L}$, $\{{\mathbf x}_n\}_{0\le n \le L-1}$, 
$\{{\mathbf y}_n\}_{0\le n \le L-1}$, $\{{\mathbf z}_n\}_{0\le n \le L-1}$ as follows:
\begin{align}\label{def_wxyz}
\nonumber
&{\mathbf w}_n={\mathbf u}_n+N^{-1/2} S(2L+2n) {\mathbf u}_{-n}\;, \;\; 0 \le n \le L,\\
\nonumber
&{\mathbf x}_n={\mathbf v}_n+N^{-1/2} S(2L+2n+1) {\mathbf v}_{-n-1}\;, \;\; 0\le n \le L-1,\\
&{\mathbf y}_n=-{\mathbf u}_{n+1}+N^{-1/2} S(2L+2n+2) {\mathbf u}_{-n-1}\; , \;\; 0 \le n \le L-1,\\
\nonumber
&{\mathbf z}_n=-{\mathbf v}_n+N^{-1/2} S(2L+2n+1) {\mathbf v}_{-n-1}\;, \;\; 0\le n \le L-1.
\end{align}
The vectors $\{{\mathbf w}_n\}_{0\le n \le L}$ (respectively, $\{{\mathbf x}_n\}_{0\le n \le L-1}$, 
$\{{\mathbf y}_n\}_{0\le n \le L-1}$ and $\{{\mathbf z}_n\}_{0\le n \le L-1}$)
form a basis for the eigenspace $W$ (respectively, $X$, $Y$ and $Z$). 
\end{corollary}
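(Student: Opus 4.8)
The plan is to establish the Corollary in three stages: first that each of the four families consists of eigenvectors lying in the claimed eigenspace, then that each family is linearly independent, and finally that the cardinalities are forced to equal the dimensions of the eigenspaces, at which point ``linearly independent with the right count'' yields the basis property.

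For the first stage I would invoke the eigenvector property stated just before the Corollary together with the explicit transforms of Theorem \ref{theorem_4N_plus1_n1}. Since $\mathbf u_n$ is even, both ${\mathcal F}\mathbf u_n+\mathbf u_n$ and ${\mathcal F}\mathbf u_{n+1}-\mathbf u_{n+1}$ are eigenvectors with eigenvalues $+1$ and $-1$ respectively; substituting \eqref{eqn_Fu_n} turns them into $\mathbf u_n+N^{-1/2}S(2L+2n)\mathbf u_{-n}=\mathbf w_n$ and $-\mathbf u_{n+1}+N^{-1/2}S(2L+2n+2)\mathbf u_{-n-1}=\mathbf y_n$. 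Since $\mathbf v_n$ is odd, $\i{\mathcal F}\mathbf v_n+\mathbf v_n$ and $\i{\mathcal F}\mathbf v_n-\mathbf v_n$ are eigenvectors with eigenvalues $-\i$ and $+\i$; using \eqref{eqn_Fv_n} and $\i\cdot(-\i)=1$ these become exactly $\mathbf x_n$ and $\mathbf z_n$ of \eqref{def_wxyz}. Hence $\mathbf w_n\in W$, $\mathbf y_n\in Y$, $\mathbf x_n\in X$, $\mathbf z_n\in Z$; this stage is pure bookkeeping with the factors $S(\cdot)$ and the signs.

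For linear independence I would use that $\{\mathbf u_n\}_{-L\le n\le L}$ and $\{\mathbf v_n\}_{-L\le n\le L-1}$ are each independent by Theorem \ref{theorem_4N_plus1_n1}, together with the observation that in every family one of the two generators appearing in a given member appears in no other member. Concretely, $\mathbf u_n$ with $n\in\{1,\dots,L\}$ occurs only in $\mathbf w_n$ (its partner $\mathbf u_{-n}$ carries a nonpositive index), while $\mathbf w_0=(1+N^{-1/2}S(2L))\mathbf u_0$ is a nonzero multiple of $\mathbf u_0$ because $S(2L)>0$; hence in any relation $\sum_n c_n\mathbf w_n=0$ the coefficient of $\mathbf u_n$ equals $c_n$ for $n\ge1$, forcing $c_n=0$ and then $c_0=0$. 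The same disjoint-index argument, which one may also read off from the nested supports \eqref{support_uv}, proves independence of $\{\mathbf x_n\}$, $\{\mathbf y_n\}$ and $\{\mathbf z_n\}$.

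It remains to match cardinalities with dimensions. The four families have sizes $L+1,L,L,L$, giving the lower bounds $\dim W\ge L+1$ and $\dim X,\dim Y,\dim Z\ge L$. Eigenvectors for the distinct eigenvalues $1,-\i,-1,\i$ are linearly independent, so the sum $W+X+Y+Z$ is direct and $\dim W+\dim X+\dim Y+\dim Z\le N$; conversely every $\mathbf r\in\r^N$ is a sum of its even and odd parts, and using that ${\mathcal F}\mathbf u$ is real for even $\mathbf u$ and $\i{\mathcal F}\mathbf v$ is real for odd $\mathbf v$, the even part lies in $W+Y$ and the odd part in $X+Z$, so the four eigenspaces span $\r^N$ and the dimensions sum to exactly $N=4L+1$. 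Since the lower bounds already sum to $N$, each inequality must be an equality; each family then has precisely the dimension of its eigenspace and, being independent, is a basis. I expect this last, structural step to be the crux: it is the only place where one reasons globally, and it is exactly here that the construction produces the independent proof of the eigenvalue multiplicities of Table \ref{tab_multiplicities}, so I would take care not to silently presuppose that table.
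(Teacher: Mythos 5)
Your proof is correct and follows the same route the paper intends: the paper leaves this corollary without a written proof, deriving it from Theorem \ref{theorem_4N_plus1_n1} together with the ${\mathcal F}{\mathbf u}\pm{\mathbf u}$, $\i{\mathcal F}{\mathbf v}\pm{\mathbf v}$ eigenvector property stated just before it, and your three stages (membership via \eqref{eqn_Fu_n}--\eqref{eqn_Fv_n}, independence by coefficient extraction in the $\{{\mathbf u}_n\},\{{\mathbf v}_n\}$ bases, then the dimension count) simply make that derivation explicit. Your care in not presupposing Table \ref{tab_multiplicities} --- getting $W+X+Y+Z=\r^N$ from the even/odd decomposition and the directness of eigenspace sums --- is exactly the right logical order, since the paper advertises this corollary as yielding an independent proof of those multiplicities.
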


\begin{remark}
The results of Corollary \ref{prop_4N_plus1_n2} imply that the eigenvalues $\{1,-\i ,-1,\i\}$ of the discrete Fourier transform have corresponding multiplicities $\{L+1,L,L,L\}$, which agrees with Table \ref{tab_multiplicities}. 
\end{remark}

The eigenbasis  constructed in Proposition \ref{prop_4N_plus1_n2} is not orthogonal. Our next step is to 
build an orthonormal basis via the Gramm-Schmidt algorithm.

\begin{figure}[t]
\centering
\subfloat[][$n=0$ (black) and $n=1$ (red)]{\label{L10_f1}\includegraphics[height =4.7cm]{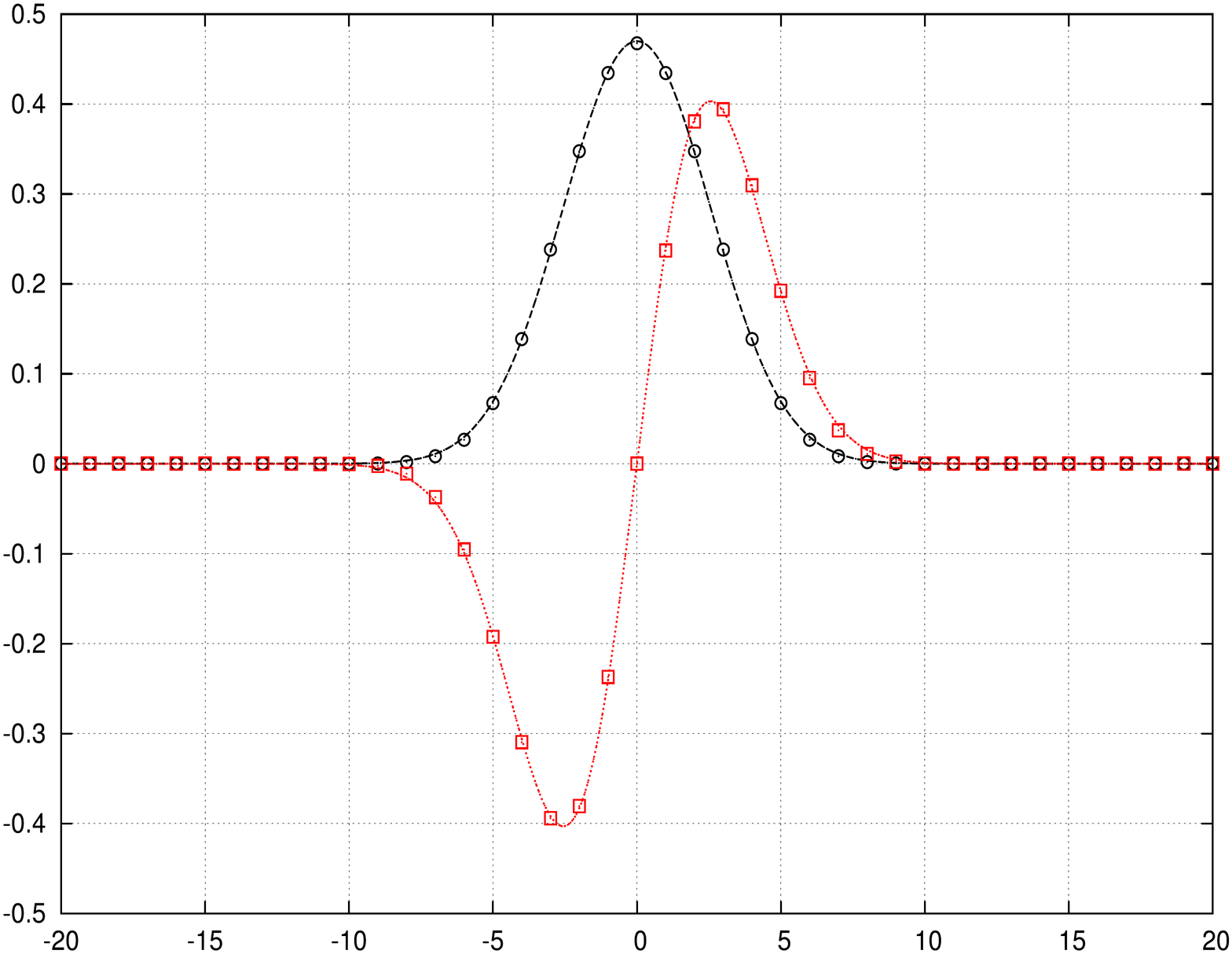}} 
\subfloat[][$n=2$ (black) and $n=3$ (red)]{\label{L10_f2}\includegraphics[height =4.7cm]{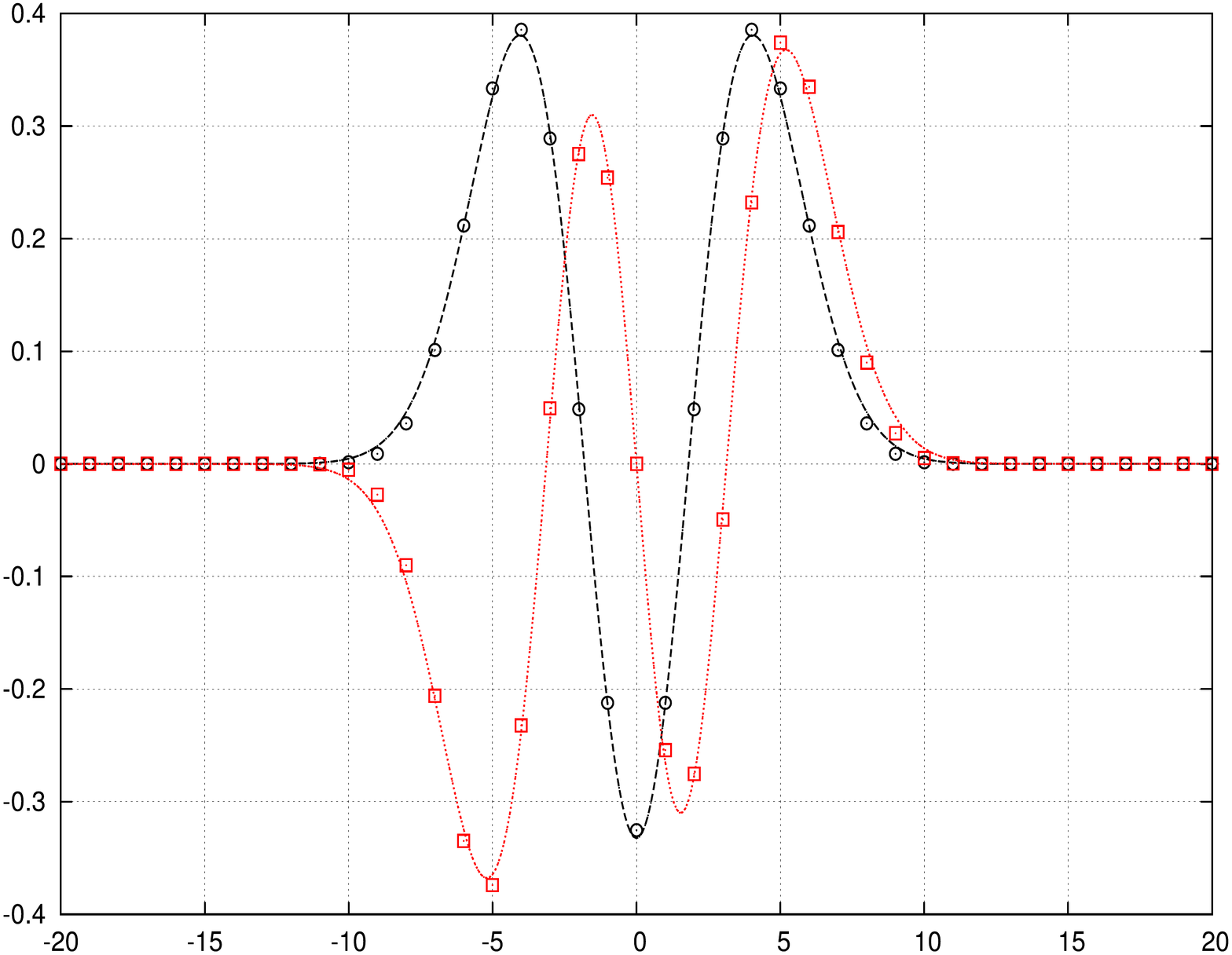}} 
\subfloat[][$n=4$ (black) and $n=5$ (red)]{\label{L10_f2}\includegraphics[height =4.7cm]{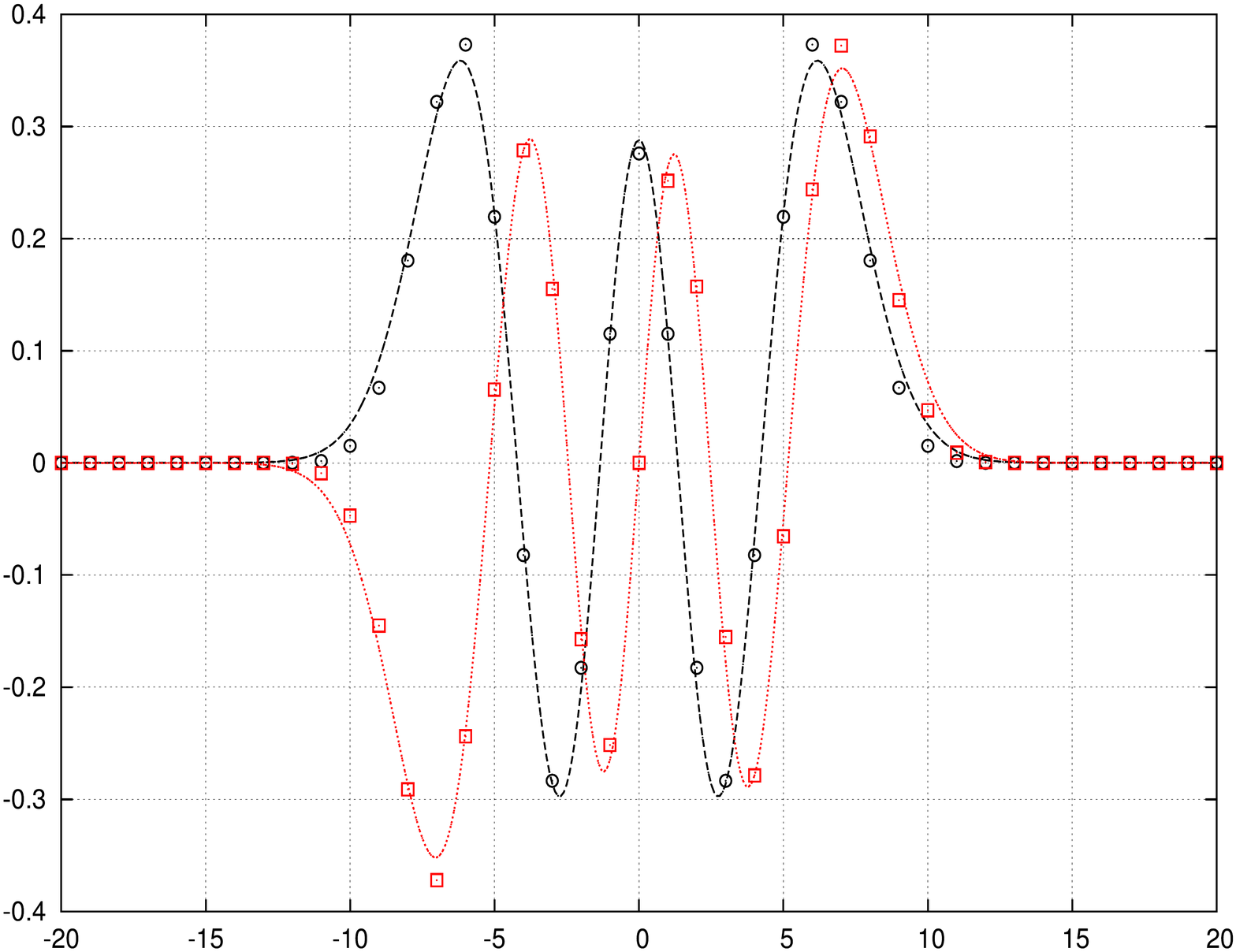}} 
\caption{Comparing the first six eigenvectors $\phi_n(k)$ (black circles and red squares) with the scaled Hermite functions $\epsilon^{1/2}\psi_n(\epsilon k)$ (black and red  lines). Here $N=41$, $\epsilon=\sqrt{2\pi /N}$ and the $x$-axis represents 
the index $k \in [-20,20]$.}
\label{fig_L10}
\end{figure}

\begin{definition}\label{def_phi}
Define $N$ vectors $\{\boldsymbol\phi_k\}_{0\le k \le N-1}$ as follows:
\begin{equation}
\boldsymbol\phi_{4n}=\frac{\tilde{\mathbf w}_n}{\|\tilde{\mathbf w}_n\|}, \;\;\; 
\boldsymbol\phi_{4n+1}=\frac{\tilde{\mathbf x}_n}{\|\tilde{\mathbf x}_n\|}, 
\;\;\; \boldsymbol\phi_{4n+2}=\frac{\tilde{\mathbf y}_n}{\|\tilde{\mathbf y}_n\|}, 
\;\;\; \boldsymbol\phi_{4n+3}=\frac{\tilde{\mathbf z}_n}{\|\tilde{\mathbf z}_n\|}, 
\end{equation}
where the vectors $\{\tilde {\mathbf w}_n\}_{0\le n \le L}$, $\{\tilde{\mathbf x}_n\}_{0\le n \le L-1}$, 
$\{\tilde{\mathbf y}_n\}_{0\le n \le L-1}$, $\{\tilde{\mathbf z}_n\}_{0\le n \le L-1}$ 
are obtained by Gramm-Schmidt algorithm starting from
$\{{\mathbf w}_n\}_{0\le n \le L}$, $\{{\mathbf x}_n\}_{0\le n \le L-1}$, 
$\{{\mathbf y}_n\}_{0\le n \le L-1}$, $\{{\mathbf z}_n\}_{0\le n \le L-1}$: 
\begin{align*}
&\tilde {\mathbf w}_m={\mathbf w}_m-\sum\limits_{j=0}^{m-1} 
\frac{{\mathbf w}_m \cdot \tilde {\mathbf w}_j}{\|\tilde {\mathbf w}_j\|^2}
\tilde {\mathbf w}_j, \;\; 0 \le n \le L, \\
&\tilde {\mathbf x}_n={\mathbf x}_n-\sum\limits_{j=0}^{n-1} 
\frac{{\mathbf x}_n\cdot\tilde {\mathbf x}_j}{\|\tilde {\mathbf x}_j\|^2}
\tilde {\mathbf x}_j,  \;\; 0 \le n \le L-1, 
\end{align*}
\begin{align*}
&\tilde {\mathbf y}_n={\mathbf y}_n-\sum\limits_{j=0}^{n-1} 
\frac{{\mathbf y}_n\cdot\tilde {\mathbf y}_j}{\|\tilde {\mathbf y}_j\|^2}
\tilde {\mathbf y}_j, \;\; 0 \le n \le L-1, \\
&\tilde {\mathbf z}_n={\mathbf z}_n-\sum\limits_{j=0}^{n-1} 
\frac{{\mathbf z}_n\cdot\tilde {\mathbf z}_j}{\|\tilde {\mathbf z}_j\|^2}
\tilde {\mathbf z}_j, \;\; 0 \le n \le L-1. \\
\end{align*}
\end{definition}

The following theorem is our main result in this paper. Recall that Hermite functions $\psi_n(x)$ are defined by \eqref{def_Hermite_psin}. 
\begin{theorem}\label{thm_convergence}
${}$
\begin{itemize}
\item[(i)]
The vectors $\{\boldsymbol\phi_n\}_{0\le n \le N-1}$ form an orthonormal basis in $\r^N$ consisting of eigenvectors of the discrete Fourier transform: for all $0\le n \le N-1$ we have ${\mathcal F} {\boldsymbol\phi}_n =  (-\i)^n {\boldsymbol\phi}_n$
\item[(ii)] 
For $0\le n \le 7$ we have
\begin{equation}\label{convergence_phi_psi}
\max_{-2L \le k \le 2L}  |\epsilon^{-1/2} \phi_n(k)-\psi_n(\epsilon k)| \to 0 \;
{\textnormal{ as }} N\to +\infty,
\end{equation}
where $\epsilon:=\sqrt{2\pi /N}$.
\end{itemize}
\end{theorem}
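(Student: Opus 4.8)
The plan for part (i) is structural and short. The four eigenspaces $W,X,Y,Z$ are ${\mathcal F}$-invariant, and the Gram--Schmidt procedure of Definition \ref{def_phi} is performed separately within each of them, so every vector $\tilde{\mathbf w}_n,\tilde{\mathbf x}_n,\tilde{\mathbf y}_n,\tilde{\mathbf z}_n$ stays inside the eigenspace of its parent family. By Corollary \ref{prop_4N_plus1_n2} those parents span the eigenspaces for $1,-\i,-1,\i$, and since $(-\i)^{4n}=1$, $(-\i)^{4n+1}=-\i$, $(-\i)^{4n+2}=-1$ and $(-\i)^{4n+3}=\i$, each normalized $\boldsymbol\phi_n$ is a unit eigenvector for the eigenvalue $(-\i)^n$. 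Orthogonality then has two sources: inside one eigenspace it is produced by Gram--Schmidt, while vectors from two different eigenspaces are orthogonal automatically, because the matrix of ${\mathcal F}$ is unitary and its eigenspaces for the distinct eigenvalues $1,-\i,-1,\i$ are therefore mutually orthogonal (for real vectors the Hermitian product equals the dot product). Since the four families contribute $L+1+L+L+L=N$ vectors in total, the orthonormal set $\{\boldsymbol\phi_n\}$ is a basis of $\r^N$.

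For part (ii) the strategy is to send each basis vector to its scaling limit under $k\mapsto x=\epsilon k$ and to argue that orthonormalization commutes with this limit. The base input is Lemma \ref{lemma_v0}; I would strengthen it into a full asymptotic expansion of $u_0(k)$ (and of $v_0(k)=\sin(2\pi k/N)u_0(k)$) in powers of $\epsilon$, with coefficients of the form (polynomial in $x$)$\times e^{-x^2/2}$ and with error bounds uniform over $|k|\le 2L$. The second input is a family of exact identities that remove the index $n$ from the $S$-products: using \eqref{sk_properties2} and the product-to-sum formula one finds that $u_n(k)/u_0(k)$ and $u_{-n}(k)/u_0(k)$ are explicit trigonometric polynomials in $\cos(2\pi k/N)$ (for example $u_1(k)/u_0(k)=2\cos(2\pi k/N)-2\sin(\pi/(2N))$), so that every $\mathbf w_n,\mathbf x_n,\mathbf y_n,\mathbf z_n$ is $u_0(k)$ or $v_0(k)$ times an explicit trigonometric polynomial. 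The finite-difference identity \eqref{vn_k1_minus_k2}, whose scaling limit is $U'=-2xU$, links the even and odd families and is what makes the ladder structure of the Hermite functions appear.

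The heart of the matter, and the main obstacle, is that the leading-order limits collapse. One checks that $N^{-1/2}S(2L)=1$ exactly while $N^{-1/2}S(2L+1)\to 2$ and $N^{-1/2}S(2L+2)\to 4$, and that $u_{\pm1}/u_0\to 2,\tfrac12$; hence $\mathbf w_0=2\mathbf u_0\to 2\psi_0$ and $\mathbf x_0\to 2v_0\propto\psi_1$ survive at leading order, but in $\mathbf y_0=-\mathbf u_1+N^{-1/2}S(2L+2)\mathbf u_{-1}$ and $\mathbf z_0=-\mathbf v_0+N^{-1/2}S(2L+1)\mathbf v_{-1}$ the leading Gaussians cancel, and the same cancellation occurs in $\mathbf w_1,\mathbf x_1,\mathbf y_1,\mathbf z_1$ after the single Gram--Schmidt subtraction. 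The real content of the theorem therefore sits in the subleading terms, and the main computational task is to expand each relevant combination to high enough order in $\epsilon$ to produce a degree-seven polynomial and to verify that the polynomial which emerges is exactly the Hermite polynomial attached to $\psi_n$. This delicate bookkeeping, carried out uniformly over $|k|\le 2L$ including the Gaussian tails, is precisely what restricts the result to $n\le 7$: each eigenspace supplies only its first two Hermite levels, and going further would require controlling the subleading asymptotics of $\mathbf w_m,\mathbf x_m,\mathbf y_m,\mathbf z_m$ for every $m$, which is the content of the conjecture.

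To convert these limits into the uniform statement \eqref{convergence_phi_psi}, I would use the robustness of Gram--Schmidt. The dot products $\mathbf a\cdot\mathbf b$ of the scaled vectors converge to the $L_2(\r)$ inner products of their limits, the factor coming from $\epsilon^{-1/2}$ turning the defining sum into a Riemann integral, and the Gram--Schmidt coefficients and norms are continuous functions of these inner products. Thus, within each eigenspace, the first two scaled basis vectors converge to linearly independent functions of the form (polynomial)$\times e^{-x^2/2}$ whose orthonormalization is $\{\psi_0,\psi_4\}$, $\{\psi_1,\psi_5\}$, $\{\psi_2,\psi_6\}$ and $\{\psi_3,\psi_7\}$ respectively --- this last identification being the classical fact that orthonormalizing $\{x^j e^{-x^2/2}\}$ yields the Hermite functions. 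Combining the uniform expansions (to control the sup-norm, not merely $L_2$) with the continuity of the orthonormalization then gives $\epsilon^{-1/2}\phi_n(k)\to\psi_n(\epsilon k)$ uniformly for $0\le n\le 7$.
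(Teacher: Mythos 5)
Your proposal follows the same architecture as the paper's proof. Part (i) is the identical structural argument (unitarity of ${\mathcal F}$ makes the eigenspaces $W,X,Y,Z$ mutually orthogonal, Gram--Schmidt works within each of them, and the dimension count $L+1+L+L+L=N$ closes it), and for part (ii) you propose exactly the paper's route: the Gaussian asymptotics of Lemma \ref{lemma_v0}, tail bounds off a central window, high-order $\epsilon$-expansions via the recursion \eqref{identity_u_n_u_n1}, identification of the Hermite polynomials $H_0,\dots,H_7$ in the subleading terms after the cancellations you correctly describe, Riemann-sum convergence of inner products, and robustness of Gram--Schmidt under rescaling and recombination (the paper's ``Fact'').

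However, one step as literally written would fail. You claim that within each eigenspace ``the first two scaled basis vectors converge to linearly independent functions'' whose orthonormalization yields $\{\psi_n,\psi_{n+4}\}$ by ``the classical fact that orthonormalizing $\{x^j e^{-x^2/2}\}$ yields the Hermite functions.'' First, the raw pairs are \emph{not} linearly independent in the limit: by \eqref{eqn_u0_u1} and \eqref{asymptotics_S2L_1} one has $w_0(k)=(4+O(\epsilon^2))u_{-1}(k)$ and $w_1(k)=(8+O(\epsilon^2))u_{-1}(k)$, so after normalization $\mathbf{w}_0$ and $\mathbf{w}_1$ converge to the \emph{same} Gaussian, and continuity of Gram--Schmidt cannot be invoked at a linearly dependent limit point. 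The paper's cure is the $\epsilon$-dependent recombination $\hat{\mathbf w}_1=(32/\eta)\epsilon^{-4}\left({\mathbf w}_1-(2-\epsilon^2+\epsilon^4/16){\mathbf w}_0\right)$ of \eqref{def_hat_w}, whose coefficient is tuned so that the $\epsilon^0$ and $\epsilon^2$ parts cancel identically; one then shows the residual Gram--Schmidt coefficient $\hat{\mathbf w}_1\cdot\hat{\mathbf w}_0/\|\hat{\mathbf w}_0\|^2\to 0$ precisely because $\int_{\r} H_4(x)e^{-x^2}\,\d x=0$. Second, the ``classical fact'' does not suffice for the identification: orthonormalizing only the \emph{two} functions $e^{-x^2/2}$ and $P(x)e^{-x^2/2}$ produces $\psi_4$ only if $P$ has no $H_2$-component (for instance $P(x)=x^4=\tfrac{1}{16}H_4+\tfrac{3}{4}H_2+\tfrac{3}{4}$ would fail, since the single subtraction removes only the $H_0$-part). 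That the limiting quartic is \emph{exactly} $H_4$ (and likewise $H_5,H_6,H_7$ in the other eigenspaces) is precisely what the explicit expansions must verify; you do flag this as the ``main computational task,'' so your plan is rescued by that verification rather than by the soft orthogonal-polynomials argument --- alternatively one could argue that the scaling limit of a vector in $W$ remains an eigenfunction of the continuous Fourier transform with eigenvalue $1$, hence lies in $\operatorname{span}\{\psi_0,\psi_4,\psi_8,\dots\}$, but you do not make that argument. With these two repairs your proposal coincides with the paper's proof by verification.
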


\begin{figure}[t]
\centering
\subfloat[][$n=0$ (black) and $n=1$ (red)]{\label{L100_f1}\includegraphics[height =4.7cm]{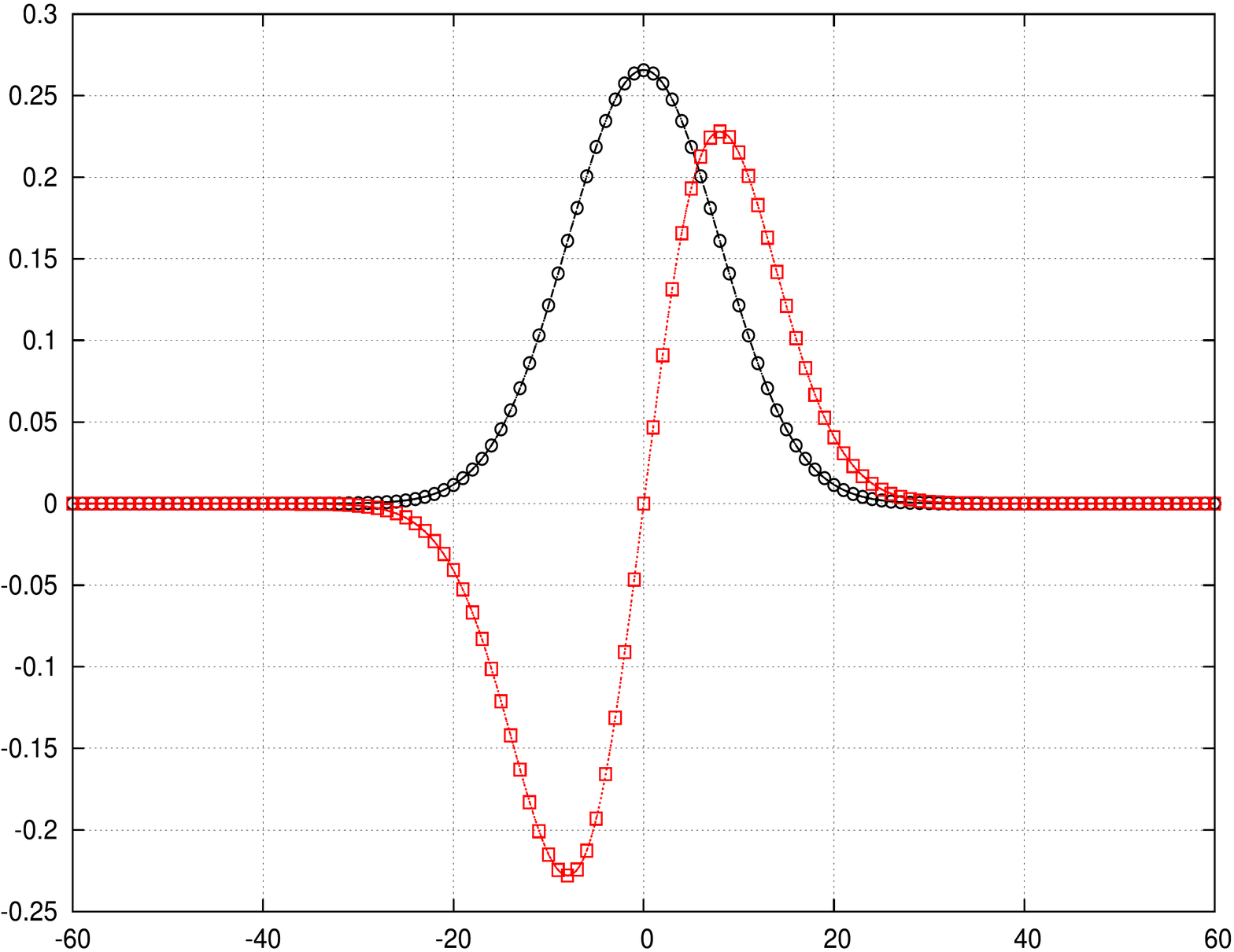}} 
\subfloat[][$n=2$ (black) and $n=3$ (red)]{\label{L100_f2}\includegraphics[height =4.7cm]{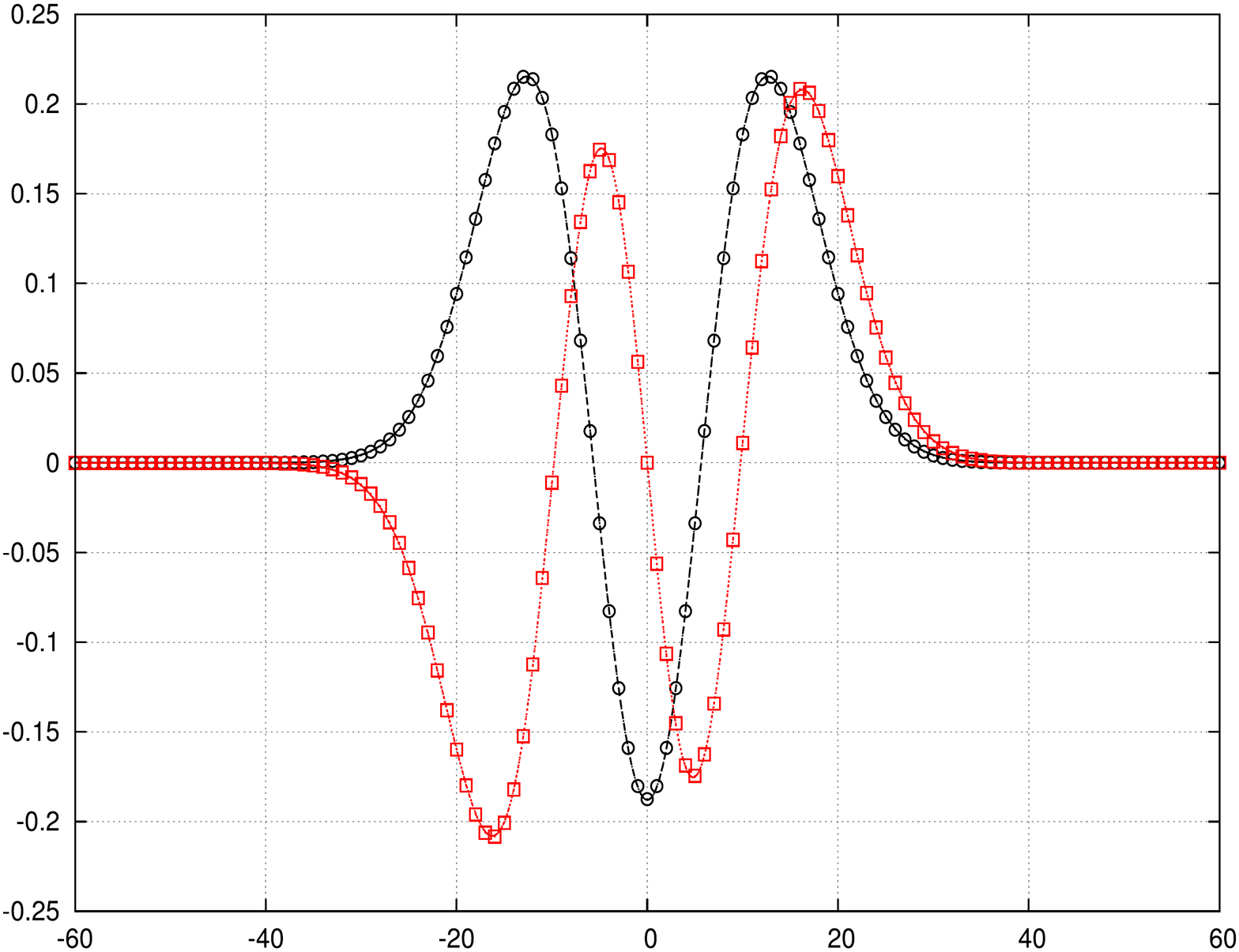}} 
\subfloat[][$n=4$ (black) and $n=5$ (red)]{\label{L100_f3}\includegraphics[height =4.7cm]{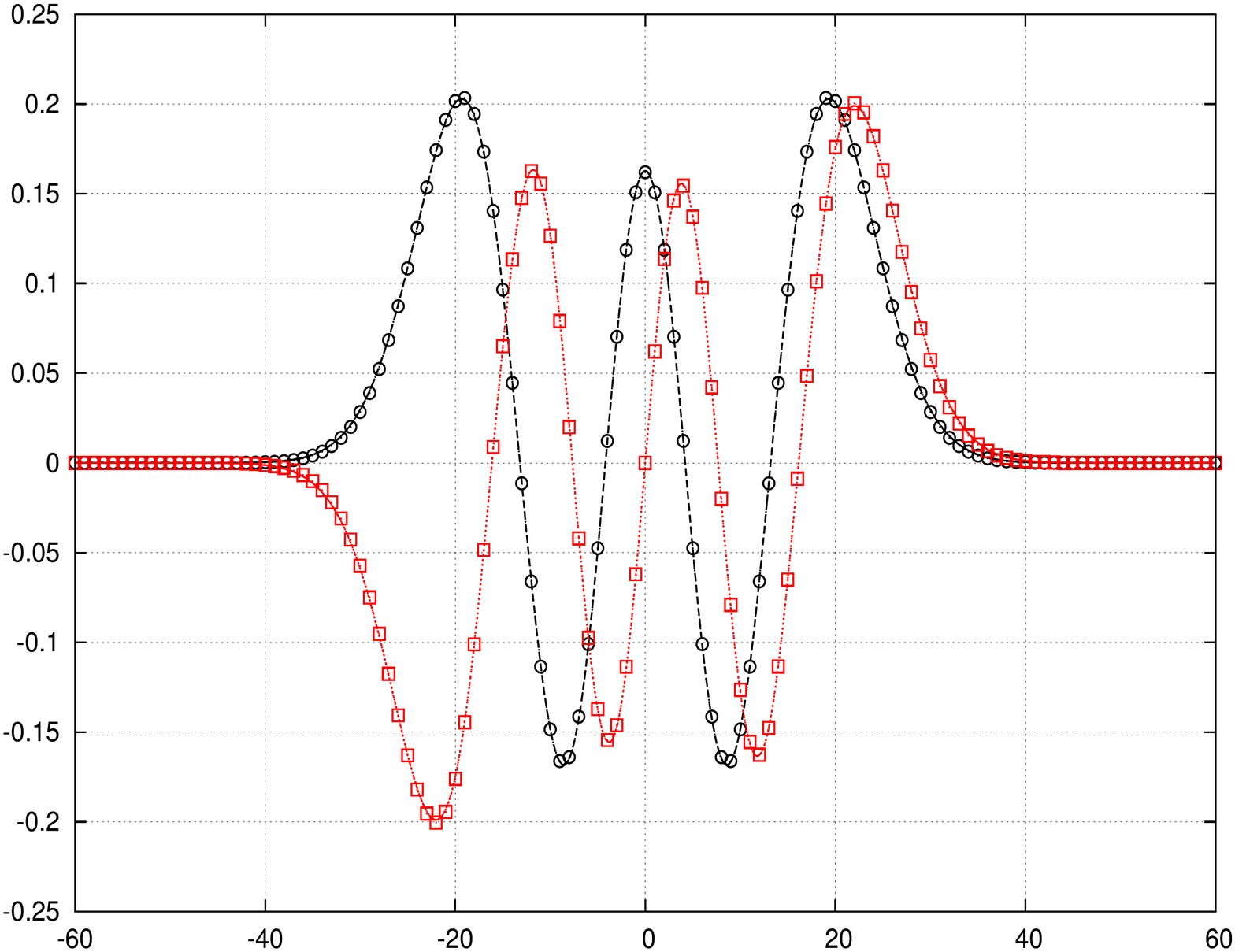}} \\
\subfloat[][$n=6$ (black) and $n=7$ (red)]{\label{L100_f4}\includegraphics[height =4.7cm]{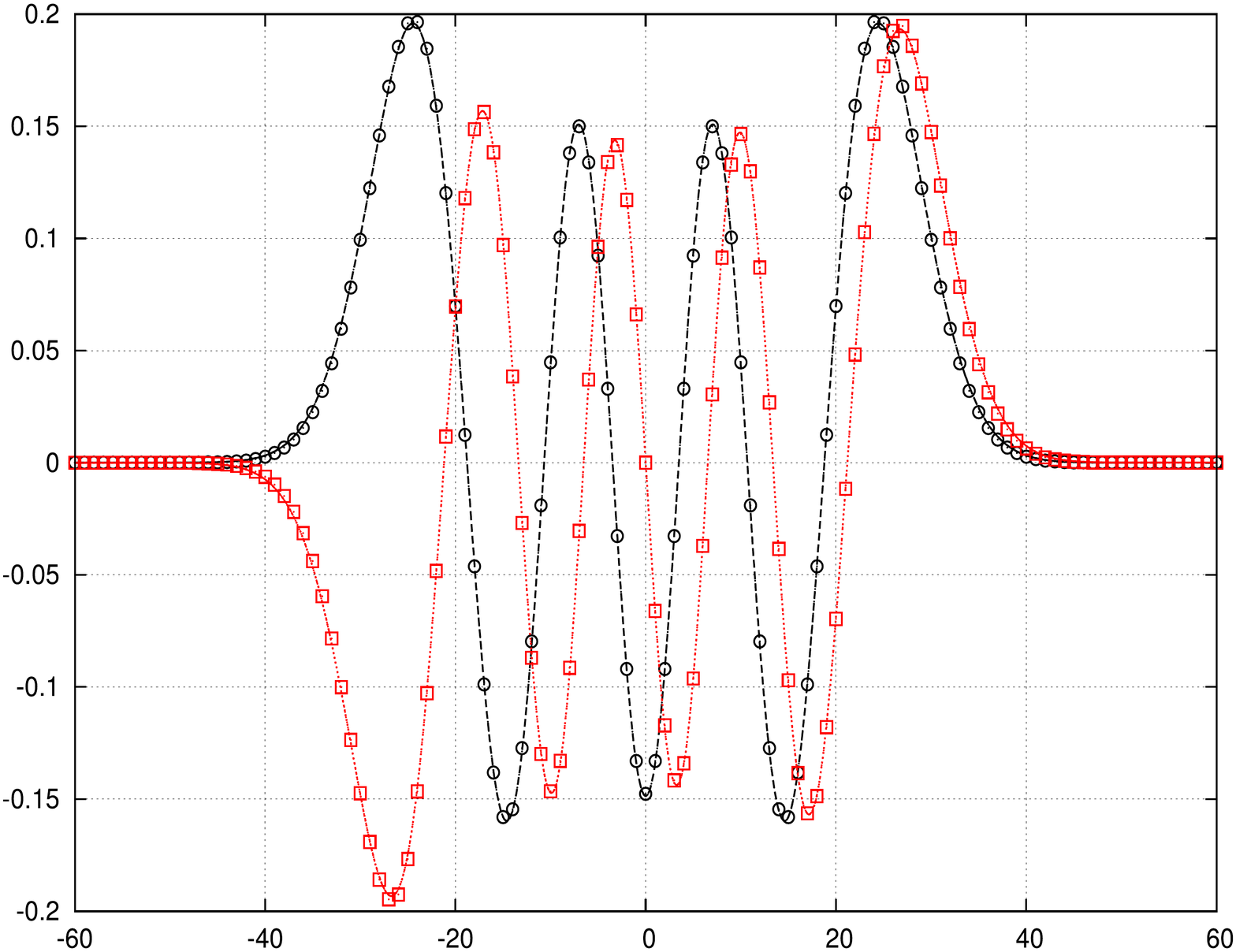}} 
\subfloat[][$n=8$ (black) and $n=9$ (red)]{\label{L100_f5}\includegraphics[height =4.7cm]{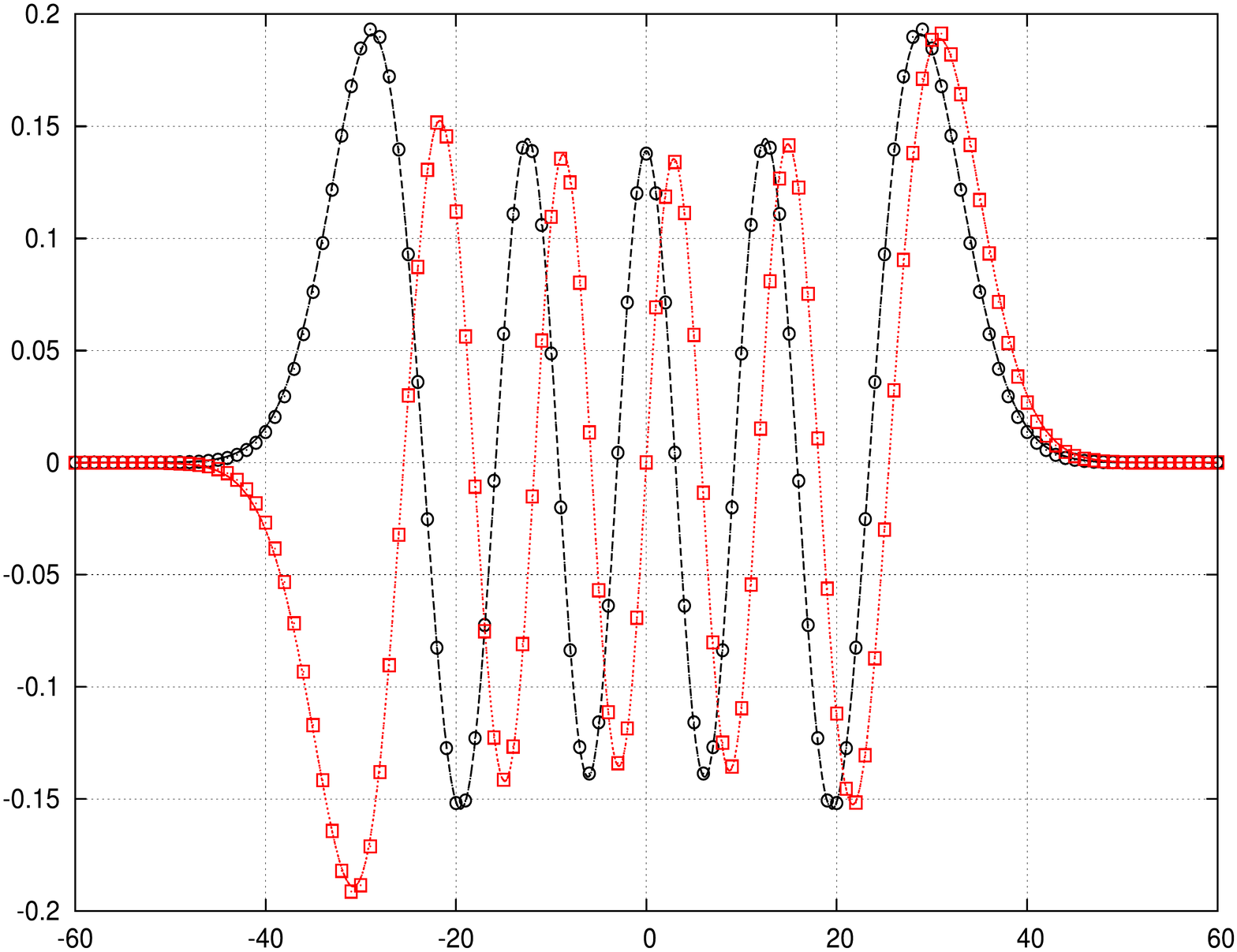}} 
\subfloat[][$n=10$ (black) and $n=11$ (red)]{\label{L100_f6}\includegraphics[height =4.7cm]{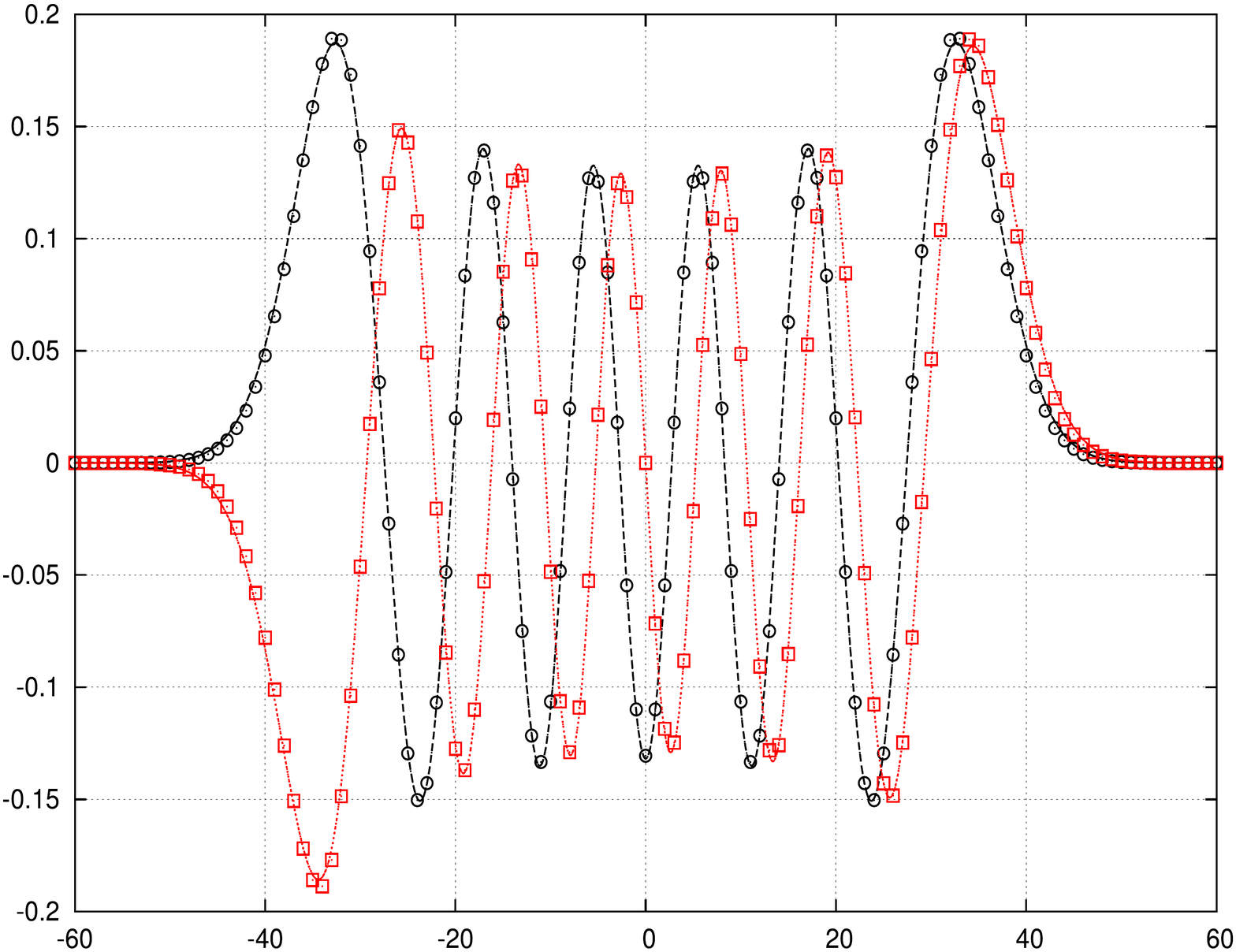}}
\caption{Comparing the first twelve eigenvectors $\phi_n(k)$ (black circles and red squares) with the scaled Hermite functions $\epsilon^{1/2}\psi_n(\epsilon k)$ (black and red lines). Here $N=401$, $\epsilon=\sqrt{2\pi /N}$ and the $x$-axis represents 
the index $k \in [-60,60]$.}
\label{fig_L100}
\end{figure}

The first six (respectively, twelve) eigenvectors ${\boldsymbol \phi}_n$ and the corresponding scaled Hermite functions are shown on 
Figure \ref{fig_L10} for $N=41$  (respectively, Figure  \ref{fig_L100} for $N=401$). The results presented on Figure \ref{fig_L100} 
suggest that the convergence  in \eqref{convergence_phi_psi} is not restricted to the first eight eigenvectors, and that it should hold 
for all of them.  Thus we formulate

\vspace{0.25cm}
\noindent
{\bf Conjecture 1:} For all $n \ge 0$ we have 
\begin{equation}
\max_{-2L \le k \le 2L}  |\epsilon^{-1/2} \boldsymbol\phi_n(k)-\psi_n(\epsilon k)| \to 0 \;
{\textnormal{ as }} N\to +\infty.
\end{equation}

 Our proof of Theorem \ref{thm_convergence} -- which is presented in Section \ref{section_proofs} -- 
is essentially a proof by verification, and it does not provide an intuitive reason as to why this convergence should hold. 
Our approach would probably work for $n \ge 8$, though 
larger values of $n$ would require a prohibitive amount of algebraic computations. Thus our 
method of proof of Theorem \ref{thm_convergence} is probably not the right way for proving 
Conjecture 1. We hope that some other properties of the vectors ${\mathbf u}_n$ and ${\mathbf v}_n$ (which we may have 
overlooked) will lead to a simple and insightful proof of this conjecture.

Figure \ref{fig_L40} provides a ``bird's eye view" of the eigenbasis ${\boldsymbol \phi_n}$ and compares it with Hermite functions. 
In particular, on Figure \ref{L40_f3} we show the values of $n$ and $k$ where $\phi_n(k)$ is either positive, or negative or equal to zero. 
From these results it seems that the vectors ${\boldsymbol \phi}_n$ have the same number of zero crossings as the corresponding 
Hermite functions $\psi_n$. Let us make this statement precise.  
We define the number of zero crossings of a vector ${\mathbf a} \in \r^N$ as the number of pairs $(i,j)$ 
such that (i) $i,j \in I_N$, (ii) $i<j$, (iii) $a(i)a(j)<0$ and (iv) $a(k)=0$ for all $i<k<j$.
Based on numerical evidence obtained from high-precision computations for many different values of $N$ 
(see Figure \ref{L40_f3} for $N=161$) we arrive at

\vspace{0.25cm}
\noindent
{\bf Conjecture 2:} For $0\le n \le N-1$ the vector ${\boldsymbol \phi}_n$ has exactly $n$ zero crossings. 
\vspace{0.15cm}

\newpage

Figure  \ref{L40_f3} also illustrates the following result 
\begin{equation}\label{length_phin}
l(\boldsymbol \phi_n)=2L+2\lceil n/4 \rceil +1,
\end{equation}
which follows easily from \eqref{support_uv} and the above construction of ${\boldsymbol \phi}_n$. 
In particular, this result implies that about 25\% of all elements $\phi_n(k)$ of the eigenbasis are equal to zero. 
In view of Theorem \ref{thm_support} the following question naturally arises: are there other sets of eigenvectors 
$\{{\boldsymbol \varphi}_n\}_{0\le n \le N-1}$ 
of the DFT such that ${\boldsymbol \varphi}_n$ satisfies \eqref{length_phin} and has exactly $n$ zero crossings? In other words, is the 
eigenbasis ${\boldsymbol \phi_n}$ unique in a similar sense in which the vectors ${\mathbf u}_n$ and ${\mathbf v}_n$ are unique?

\begin{figure}
\centering
\subfloat[][Hermite functions $\epsilon^{1/2}\psi_n(\epsilon k)$]{\label{L40_f1}\includegraphics[height =5.7cm]{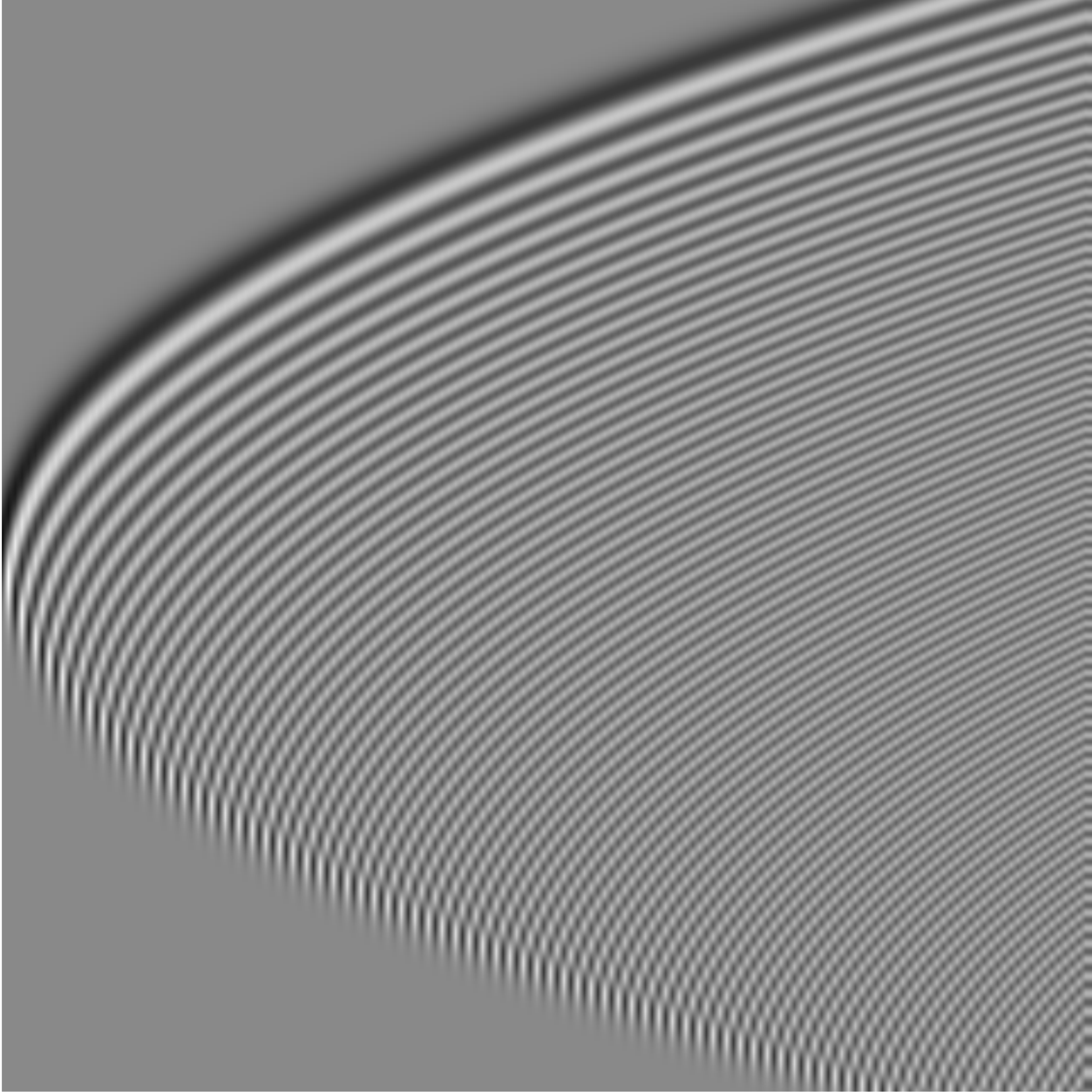}} 
 \;\;\;
 \subfloat[][Eigenvectors $\phi_n(k)$ ]{\label{L40_f2}\includegraphics[height =5.7cm]{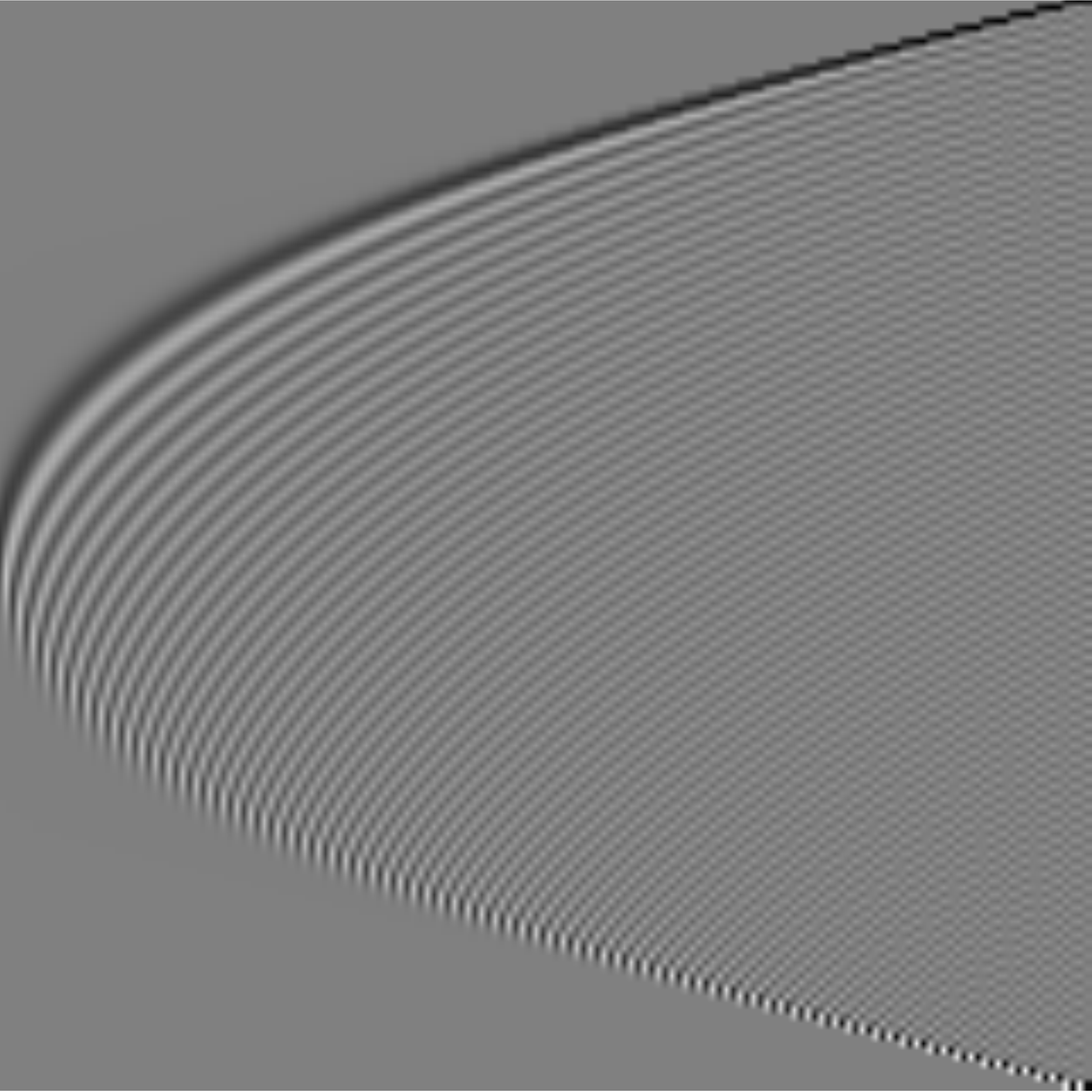}} 
\;\;\;
\subfloat[][The sign of $\phi_n(k)$]{\label{L40_f3}\includegraphics[height =5.7cm]{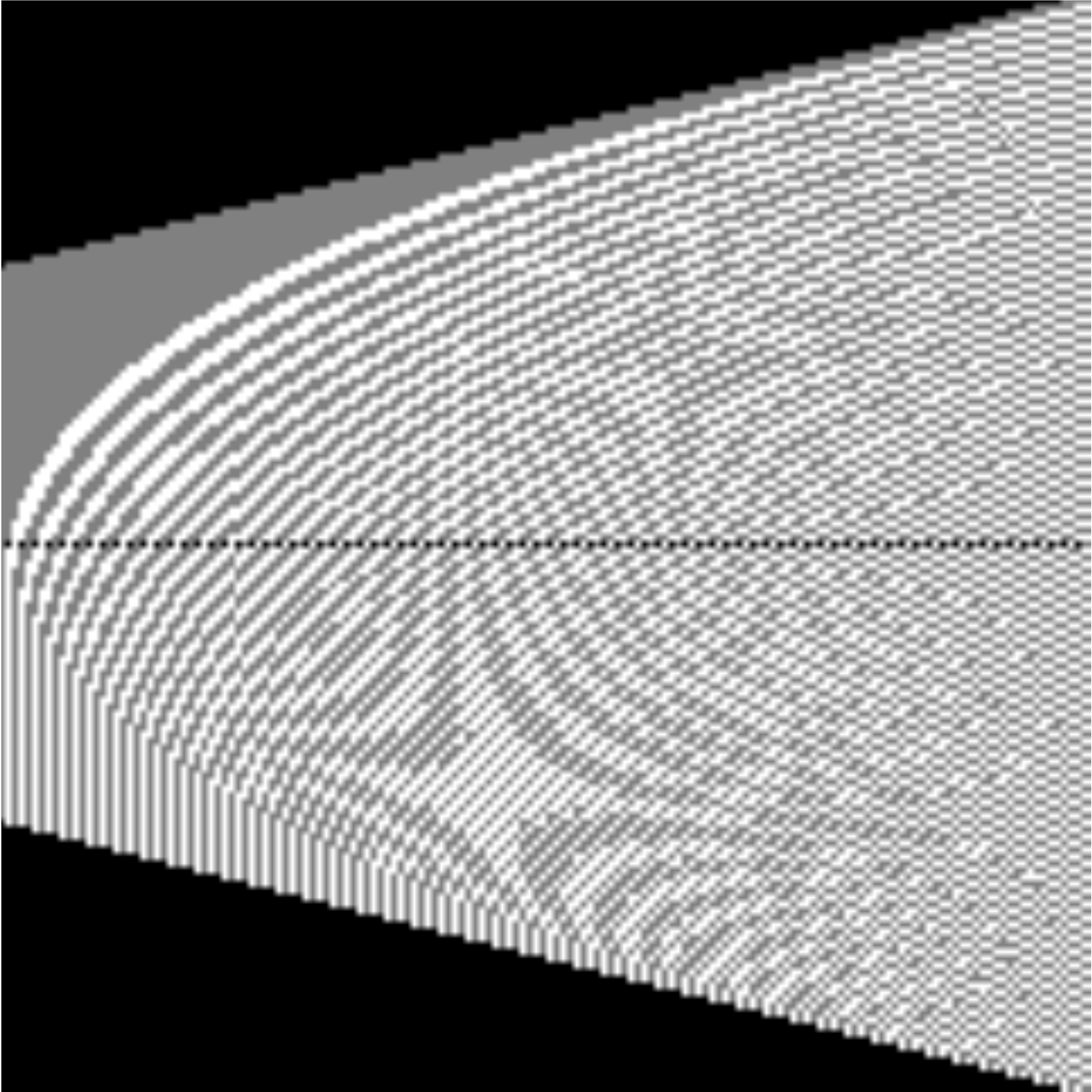}} 
\caption{Intensity plots of Hermite functions $\epsilon^{1/2}\psi_n(\epsilon k)$,  the eigenvectors $\phi_n(k)$ 
and the sign of the eigenvectors $\phi_n(k)$. Here $N=161$, $\epsilon=\sqrt{2\pi /N}$, the $x$-axis represents the index $n \in \{0,1,\dots,160\}$ 
and the $y$-axis represents the index $k \in \{-80,-79, \dots, 80\}$. Graph (c) shows the values where $\phi_n(k)=0$ (black pixels), $\phi_n(k)>0$ (grey pixels) and $\phi_n(k)<0$ (white pixels).}
\label{fig_L40}
\end{figure}


\section{Proofs}\label{section_proofs}


For a nonzero vector ${\mathbf a} \in \c^N$ we denote by $R({\mathbf a};z)$ the rational function 
\begin{equation}\label{def_Raz}
R({\mathbf a};z):=\sum\limits_{k \in I_N} a(k) z^k. 
\end{equation}
We also denote $\zeta:=\exp(-2\pi \i /N)$.
It is clear that ${\mathbf b}={\mathcal F} {\mathbf a}$ if and only if $b(k)=N^{-1/2}R({\mathbf a};\zeta^k)$ for all
$k\in I_N$. The following simple observation will play the main role in the proof of Theorem \ref{thm_support}: 
\begin{equation}\label{Raz_property}
\textnormal{{\it The function $R({\mathbf a};z)$ has exactly $l({\mathbf a})-1$ zeros in 
${\c}\setminus\{0\}$ (counting their multiplicity)}.}
\end{equation}
 To establish this fact we define $j=\min\{ k \in I_N \;: a(k)\ne 0\}$ and note 
that $z^{-j} R({\mathbf a};z)$ is a polynomial of degree $l({\mathbf a})-1$. 

To give a preview of our method of the proof of Theorem \ref{thm_support}, let us prove Kong's result \eqref{Kong_Thm1} (see \cite[Theorem 1]{Kong_2008}) using the above property \eqref{Raz_property}.
 Assume that for some nonzero vector ${\mathbf a} \in \c^N$
we have $l({\mathbf a})=m$ and $l({\mathbf b})\le N-m$ where ${\mathbf b}={\mathcal F}{\mathbf a}$. Since $l({\mathbf b})\le N-m$ it follows 
that $b(k)=0$ for at least $m$ indices $k\in I_N$. In view of identity $b(k)=N^{-1/2}R({\mathbf a};\zeta^k)$, we see that 
the nonzero rational function $R({\mathbf a};z)$ has at least $m$ distinct zeros on the unit circle, which is impossible because of 
\eqref{Raz_property}.  

\vspace{0.25cm}
\noindent
{\bf Proof of Theorem \ref{thm_support}:}
Everywhere in this proof we will denote ${\mathbf b}={\mathcal F}{\mathbf a}$. When we say that a vector or a function is 
``unique" we will mean ``unique, up to multiplication by a nonzero constant". 

Let us first prove part (i) for even vectors. Assume that $N=2M+1$ for some $M\ge 1$, so that 
$I_N=\{-M,-M+1,\dots,M\}$. 
It is clear that there exists a unique even vector ${\mathbf a} \in {\mathbf R}^N$ of length one 
 (its only nonzero element is $a(0)$). The corresponding DFT vector ${\mathbf b}$ has elements
 $b(k)=a(0)N^{-1/2}$ and has length $N$. Having dealt with this special case, we now  
assume that $l({\mathbf a})=2m+1$ for some $1\le m \le M-1$. If we impose the constraint $l({\mathbf a})+l({\mathbf b})=N+1$, then 
$l({\mathbf b})=2M-2m+1$ and we must have $R({\mathbf a};\zeta^k)=b(k)=0$ for $M-m+1 \le |k|\le M$. This gives us $2m$ zeros 
of $R({\mathbf a};z)$, and since $z^m R({\mathbf a};z)$ is a polynomial of degree $2m$ we can uniquely
 identify 
$$
R({\mathbf a};z)=z^{-m} \prod\limits_{k=M-m+1}^M (z-\zeta^k)(z-\zeta^{-k}), 
$$ 
which in turn uniquely identifies the vector ${\mathbf a}$. Thus we have shown that 
for every $1\le m \le M-1$ there exists a unique even vector ${\mathbf a}$ of length $2m+1$ satisfying  
$l({\mathbf a})+l({\mathbf b})=N+1$. Since there is also exactly one even vector of length one and exactly one
even vector of length $N$ satisfying the same property (these were discussed above), we conclude that 
the total number of such vectors is $M+1$.

Consider next an odd vector ${\mathbf a} \in {\mathbb R}^N$ of length $2m+1$. Note that there are no odd vectors of length one, thus 
we can assume that $1\le m\le M$.
Since ${\mathbf a}$ is an odd vector, the function $R({\mathbf a};z)$ can be written in the form
$$
R({\mathbf a};z)=\sum\limits_{k=1}^m a(k) (z^{k}-z^{-k}).
$$
In particular, we see that $R({\mathbf a};\pm 1)=0$. Assuming that 
 $l({\mathbf a})+l({\mathbf b})=N+3$ we obtain $l({\mathbf b})=2M-2m+3$. Since ${\mathbf b}$ is an odd vector 
 of length $2M-2m+3$ we must have $R({\mathbf a};\zeta^k)=b(k)=0$ for $k=0$ and $M-m+2 \le |k|\le M$.
  This gives us $2m-1$ distinct zeros of $R({\mathbf a};z)$. Taking into account the zero at $z=-1$ and using 
  the fact that $z^{m} R({\mathbf a};z)$ is a polynomial of degree $2m$, we can uniquely identify
$$
R({\mathbf a};z)=z^{-m} (z-1)(z+1) \prod\limits_{k=M-m+2}^M (z-\zeta^k)(z-\zeta^{-k}).
$$ 
Thus we have proved that for every $1\le m \le M$ there exists a unique odd vector ${\mathbf a}$ of length $2m+1$ satisfying  
$l({\mathbf a})+l({\mathcal F}{\mathbf a})=N+3$. From our proof it should be clear that there can not exist an odd vector satisfying 
$l({\mathbf a})+l({\mathcal F}{\mathbf a})=N+1$, because in this case the function $R({\mathbf a};z)$ would 
have $2m+2$ zeros, which is impossible due to \eqref{def_Raz}. This completes the proof of part (i). 

Next, let us prove part (ii) for even vectors. Assume that $N=2M$ and that ${\mathbf a} \in {\mathbb R}^N$ is a nonzero 
even vector.  
In this case 
$$
R({\mathbf a};z)=a(0)+\sum\limits_{k=1}^{M-1} a(k) (z^k+z^{-k})+a(M)z^M,
$$
and assuming that $a(M)=0$ we see that this function satisfies $R({\mathbf a};z)=R({\mathbf a};1/z)$. This identity implies
\begin{equation}\label{R_prime}
\frac{\d }{\d z}R({\mathbf a};z) \bigg \vert_{z=\pm 1}=0.
\end{equation} 
There is a unique even vector of length one satisfying $a(M)=0$ (its only nonzero elements is $a(0)$), however for this vector 
we have $b(M)=a(0)N^{-1/2}\neq 0$, thus we have to 
exclude this vector from consideration and we can assume that $l({\mathbf a})=2m+1$ with $1\le m \le M-1$. 
If $l({\mathbf a})+l(\mathbf b)=N+2$ then $l(\mathbf b)=2M-2m+1$. 
Since ${\mathbf b}$ is also an even vector with $b(M)=0$, the equality $l(\mathbf b)=2M-2m+1$ implies that 
$R({\mathbf a};\zeta^k)=b(k)=0$ for $k=M$ and 
for $M-m+1\le |k|\le M-1$. Note that $\zeta^M=-1$, thus $R({\mathbf a};z)$ has a zero at $z=-1$, and from condition \eqref{R_prime} we see that this zero must have multiplicity two. 
Thus we have determined all $2m$ zeros of the polynomial $z^{m}R({\mathbf a};z)$ of degree $2m$, which allows us to  identify
uniquely
$$
R({\mathbf a};z)=z^{-m} (z+1)^2 \prod\limits_{k=M-m+1}^{M-1} (z-\zeta^k)(z-\zeta^{-k}).
$$   
To summarize: we have proved that for every $1\le m \le M-1$ there exists a unique even vector ${\mathbf a}$ of length $2m+1$ satisfying  
$a(M)=b(M)=0$ and $l({\mathbf a})+l({\mathbf b})=N+2$. From our proof it is clear that there can not exist an even vector with the same properties satisfying $l({\mathbf a})+l({\mathbf b})=N+1$, because in this case the function $R({\mathbf a};z)$ would have $2m+1$ zeros, which is impossible due to \eqref{Raz_property}. 
This completes the proof of part (ii) for even vectors.

The proof of part (ii) for odd vectors follows the same steps and is left to the reader. 
\qed

The proof of Theorem \ref{theorem_4N_plus1_n1} (and the derivation of the corresponding formulas in the Appendix A) is based on the following lemma. We 
emphasize that this result is true for {\it all} $N\ge 1$ (not only those satisfying $N \equiv 1 (\textnormal {mod }  4)$). 

\begin{lemma}\label{main_lemma}
For all $N \ge 1$ and $m,n \in \{0,1,\dots,N-1\}$ we have
\begin{equation}\label{eqn_main_lemma}
\sum\limits_{k=0}^{N-1-n} S(n+k) S(N-k-1) e^{-\frac{\pi \i}{2N} (2k+n+1-N)(2m-n)} =S(n)S(N+m-n-1)S(N-m-1).
\end{equation}
\end{lemma}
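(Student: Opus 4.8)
The plan is to convert the trigonometric sum into a $q$-series and then evaluate it with the $q$-Binomial Theorem. Throughout I set $q:=\exp(2\pi\i/N)$ (a primitive $N$-th root of unity) and $\omega:=\exp(\i\pi/N)$, so that $q=\omega^2$ and $\i=\omega^{N/2}$. The starting point is the elementary factorization $2\sin(\pi j/N)=\i\,\omega^{-j}(1-q^j)$, which telescopes to
\[
S(k)=\i^{\,k}\,\omega^{-k(k+1)/2}\,(q;q)_k,\qquad (q;q)_k:=\prod_{j=1}^k(1-q^j),
\]
with the empty-product convention $S(0)=(q;q)_0=1$ (which is the one consistent with \eqref{sk_properties2}).

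First I would rewrite the product of $S$-factors in the summand. Applying \eqref{sk_properties2} at index $n+k$ together with the symmetry $\sin(\pi j/N)=\sin(\pi(N-j)/N)$ gives $S(n+k)S(N-k-1)=N\prod_{j=k+1}^{k+n}2\sin(\pi j/N)=N\,\i^{\,n}\,\omega^{-kn-n(n+1)/2}(q^{k+1};q)_n$, so the entire $S$-dependence collapses into the single shifted Pochhammer symbol $(q^{k+1};q)_n$. Next I would note that the exponential factor equals $\omega^{-\frac12(2k+n+1-N)(2m-n)}$, and that its $k$-linear part combines with $\omega^{-kn}$ to produce exactly $\omega^{-2mk}=q^{-mk}$ (the point being that $n+(2m-n)=2m$). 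Pulling all $k$-independent powers of $\i$ and $\omega$ out front, the left-hand side becomes a constant multiple of the core sum $\Sigma_0:=\sum_{k=0}^{N-1-n}q^{-mk}(q^{k+1};q)_n$.

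The key step, and the one I expect to be decisive, is the evaluation of $\Sigma_0$. The trick is that $(q^{k+1};q)_n=\prod_{i=1}^n(1-q^{k+i})$ vanishes for every $k\in\{N-n,\dots,N-1\}$, because then one factor is $1-q^N=0$; hence the summation range may be extended to the full period $k=0,\dots,N-1$ at no cost. After this extension I would expand by the $q$-Binomial Theorem, $(q^{k+1};q)_n=\sum_{j=0}^n(-1)^j\binom{n}{j}_q q^{\binom{j+1}{2}}q^{kj}$, where $\binom{n}{j}_q$ denotes the Gaussian binomial coefficient, interchange the two sums, and invoke the root-of-unity orthogonality $\sum_{k=0}^{N-1}q^{k(j-m)}=N\,\delta_{j\equiv m}$. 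For $0\le j\le n$ and $0\le m\le N-1$ this selects only $j=m$, which is possible precisely when $m\le n$. Thus $\Sigma_0=N(-1)^m\binom{n}{m}_q q^{\binom{m+1}{2}}$ when $m\le n$, and $\Sigma_0=0$ when $m>n$ — matching that the right-hand side vanishes there, since $S(N+m-n-1)=0$.

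Finally I would reconcile the two sides. Rather than expanding the right-hand side directly, I would simplify it via \eqref{sk_properties2}: since $S(N-m-1)=N/S(m)$ and $S(N+m-n-1)=N/S(n-m)$, it equals $N^2\,S(n)/(S(m)S(n-m))=N^2\,\omega^{-m(n-m)}\binom{n}{m}_q$. It then remains to check that the accumulated prefactor of $\Sigma_0$, namely $N\,\i^{\,n}\,\omega^{-n(n+1)/2-\frac12(n+1-N)(2m-n)}$, reduces to the same thing. Writing $\i=\omega^{N/2}$ and $q^{\binom{m+1}{2}}=\omega^{m(m+1)}$ and collecting exponents, the total power of $\omega$ simplifies to $2mN+m(m-n)$; since $\omega^{2mN}=1$, this leaves exactly $N^2\,\omega^{-m(n-m)}\binom{n}{m}_q$, as required. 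This last paragraph is pure (if slightly fiddly) bookkeeping of powers of $\i$ and $\omega$; the only genuinely substantive idea is the range-extension that turns the truncated geometric sum into an exact orthogonality relation.
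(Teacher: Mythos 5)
Your proof is correct and is essentially the paper's own argument: both hinge on the $q$-Binomial Theorem specialized at $x=q^k$ with $q=e^{2\pi \i/N}$ together with root-of-unity orthogonality — the paper phrases the extraction of the $j=m$ term as applying the DFT to an inverse-DFT expansion, while you extract it directly via $\sum_{k=0}^{N-1}q^{k(j-m)}=N\delta_{jm}$ after extending the truncated sum over a full period, which is the same computation organized in the opposite direction (likewise, your use of \eqref{sk_properties2} at the start to collapse $S(n+k)S(N-k-1)$ into $(q^{k+1};q)_n$ mirrors the paper's identity $(q^{k+1};q)_n=(q;q)_{n+k}/(q;q)_k$ and its application of \eqref{sk_properties2} at the end). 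Your side remark that the convention $S(0)=1$, rather than the stated $S(0):=0$, is the one forced by \eqref{sk_properties2} and by the $n=0$ case of the lemma is also correct.
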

\begin{proof}
Let us define the $q$-Pochhammer symbol as 
\begin{equation*}
(z;q)_{j}=\prod\limits_{k=0}^{j-1} (1-aq^k),  \;\;\; \textnormal{ for } \; j\ge 1,  
\end{equation*}
and $(z;q)_0=1$. We begin with the $q$-Binomial Theorem
\begin{equation*}
(xq;q)_n=\sum\limits_{m=0}^n \frac{(q;q)_n}{(q;q)_m (q;q)_{n-m}} q^{m(m+1)/2} (-1)^m x^m.
\end{equation*}
When $x=q^k$, the above equality gives us 
\begin{equation}\label{lem1_proof0}
\frac{(q;q)_{n+k}}{(q;q)_k}=\sum\limits_{m=0}^n \frac{(q;q)_n}{(q;q)_m (q;q)_{n-m}} q^{m(m+1)/2} (-1)^m q^{km},
\end{equation}
where we have also used the identity $(q^{k+1};q)_n=(q;q)_{n+k}/(q;q)_k$.
Next, we set $q=\exp(2\pi \i/N)$ in \eqref{lem1_proof0}, we check that this choice of $q$ implies
\begin{equation*}
(q;q)_k=S(k)  e^{-\pi\i k (N-k-1)/(2N)}, \;\;\; \textnormal{ for } \; k\ge 0, 
\end{equation*}
and we obtain 
$$
\frac{S(n+k)}{S(k)} e^{-\pi \i n(N-2k-n-1)/(2N)}=\sum\limits_{m=0}^n \left[ \frac{S(n)}{S(m) S(n-m)}
(-1)^m e^{\pi \i m(n+1)/N} \right] e^{2\pi \i km/N}.
$$
The expression in the right-hand side is the inverse Fourier transform of the sequence given in square brackets. 
Applying the discrete Fourier transform results in
$$
\frac{S(n)}{S(m) S(n-m)}
(-1)^m e^{\pi \i m(n+1)/N}=N^{-1}\sum\limits_{k=0}^{N-1} 
\frac{S(n+k)}{S(k)} e^{-\pi \i n(N-2k-n-1)/(2N)} e^{-2\pi \i km/N}. 
$$
Formula \eqref{eqn_main_lemma} follows by simplifying the above expression and applying the identity
\eqref{sk_properties2}. 
\end{proof}

\vspace{0.25cm}
\noindent
{\bf Proof of Theorem \ref{theorem_4N_plus1_n1}:} 
In order to prove formula \eqref{eqn_Fu_n}, we set $N=4L+1$  and replace variables 
$k$, $m$ and $n$  in the equation \eqref{eqn_main_lemma} with $k \mapsto L-n+k$, $m \mapsto L+n+l$ and $n \mapsto 2L+2n$. This gives us the following identity 
$$
\sum\limits_{k=-L+n}^{L-n} S(3L+n+k)S(3L+n-k) e^{-2\pi \i kl/N} = S(2L+2n) S(3L-n+l)S(3L-n-l),
$$
which is valid for  $-L+n \le k \le L-n$, $-L-n\le l \le 3L-n$ and $-L \le n \le L$.  
Extending the above identity by periodicity for the values of $l$ in the range $-2L \le l \le 2L$ gives us the desired result \eqref{eqn_Fu_n}.
 
The fact that these vectors are the same ones that were discussed in Theorem \ref{thm_support}(i) follows from \eqref{eqn_Fu_n} and the fact that $l({\textbf u}_n)=2L-2n+1$. In order to prove that these vectors are linearly independent, let us assume to the contrary that
\begin{equation}\label{linear_combination}
a_1{\mathbf u}_{i_1}+a_2{\mathbf u}_{i_2}+\dots+a_k{\mathbf u}_{i_k}={\mathbf 0},
\end{equation}
where ${\mathbf 0}$ denotes the zero vector, $a_j\neq 0$ for all $1\le j \le k$ and $-L \le i_1<i_2<\dots<i_k\le L$. Note that, by construction,
$u_n(k)=0$ for $L-n+1\le |k| \le 2L$  and $u_n(k)>0$ for $|k|\le L-n$. Therefore we have $u_{i_k}(L-i_k)>0$ while 
$u_{i_l}(L-i_k)=0$ for all other indices $i_l$. Equation \eqref{linear_combination} then forces $a_k$ to be zero and we arrive at a contradiction. 
 
To prove formula \eqref{eqn_Fv_n}, we use \eqref{def_un} and and check that 
\begin{align*}
&4 \sin(\pi (3L+n+k)/N) \sin(\pi (3L+n+1+k)/N) u_{n-1}(k)=u_n(k+1),\\
&4 \sin(\pi (3L+n-k)/N) \sin(\pi (3L+n+1-k)/N) u_{n-1}(k)=u_n(k-1).
\end{align*}
Subtracting the above two equations and simplifying the result gives us formula \eqref{vn_k1_minus_k2}. 
Applying the discrete Fourier transform to both sides of \eqref{vn_k1_minus_k2} we obtain
\begin{align}\label{lemma_2_proof1}
&-4 \sin(\pi(2L+2n)/N) ({\mathcal F}{\mathbf v}_{n-1})(l)=\sum\limits_{k=-2L}^{2L} e^{-2\pi \i k l/N} (u_n(k+1)-u_n(k-1))\\ \nonumber
&\qquad \qquad=
2 \i \sin(2\pi l/N) \sum\limits_{k=-2L}^{2L} e^{-2\pi \i k l/N} u_n(k)=
2 \i \sin(2\pi l/N) ({\mathcal F}{\mathbf u}_{n})(l). 
\end{align}
In deriving the above identity we have used the fact that $u_n(\pm 2L)=0$ for $n>-L$. Formula \eqref{eqn_Fv_n}
follows from \eqref{eqn_Fu_n} and \eqref{lemma_2_proof1}. The linear independence of the vectors ${\mathbf v}_n$ can be established by  the same argument as we used for ${\mathbf u}_n$.
\qed

The following result will be needed for the proof of Theorem \ref{thm_convergence}. 
We recall the definition of Catalan's constant
\begin{equation*}
G:=- \int_0^{\pi/2} \ln (2 \sin(t/2)) \d t=0.915965594...
\end{equation*}

\begin{lemma}\label{lemma_v0}
There exists an absolute constant $C>0$ such that for all $N$ large enough
and all $k \in [-N^{2/3},N^{2/3}]\cap {\mathbb Z}$ we have
\begin{equation}\label{main_estimate}
u_0(k)=2^{-1/4} N e^{NG/\pi-\pi k^2/N}(1+{\mathcal E})
\end{equation}
and $|{\mathcal E}|<C N^{-1/3}$. 
\end{lemma}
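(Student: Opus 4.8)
The plan is to establish the asymptotic formula \eqref{main_estimate} by taking logarithms of the product definition of $u_0(k)$ and approximating the resulting sum by an integral. Recall from \eqref{def_un} that $u_0(k)=S(3L+k)S(3L-k)$, and from \eqref{def_sk} that $S(m)=\prod_{j=1}^m 2\sin(\pi j/N)$. Thus
$$
\ln u_0(k) = \sum_{j=1}^{3L+k} \ln\bigl(2\sin(\pi j/N)\bigr) + \sum_{j=1}^{3L-k} \ln\bigl(2\sin(\pi j/N)\bigr).
$$
First I would define $f(t):=\ln(2\sin(\pi t))$ and recognize each sum as a Riemann sum of $\int f$ over a subinterval of $[0,1]$. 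The key analytic input is that $\int_0^{x} \ln(2\sin(\pi t))\,\d t$ relates to Catalan's constant: in particular, using the substitution that converts the given definition $G=-\int_0^{\pi/2}\ln(2\sin(t/2))\,\d t$ into $G=-\pi\int_0^{1/2}\ln(2\sin(\pi s))\,\d s$, one sees that $\int_0^{1/2} f(t)\,\d t = -G/\pi$, while $\int_0^1 f(t)\,\d t = 0$ by symmetry of $\sin$ about $t=1/2$. Since $N=4L+1$, the upper limits $3L\pm k$ correspond to points near $3/4$ on the scale $j/N$, so the dominant contribution to $NG/\pi$ will emerge from the integral of $f$ over an interval of length $\approx 3/4$, and the symmetric $\pm k$ dependence will produce the Gaussian factor $e^{-\pi k^2/N}$.

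Next I would make the integral approximation quantitative. Writing each sum as $N\cdot(\text{Riemann sum with mesh }1/N)$ and applying the Euler--Maclaurin formula (or a direct trapezoidal-rule error bound), I expect the leading term to be $N\int_0^{(3L\pm k)/N} f(t)\,\d t$. Combining the two sums, the $k$-linear parts of the integrals cancel by symmetry and the $k$-quadratic part yields, to leading order, $-\pi k^2/N$ after Taylor-expanding the integrand near $t=3/4$ where $f$ is smooth (note $\sin(3\pi/4)=2^{-1/2}$, which is the source of the $2^{-1/4}$ prefactor). The $N$-linear term assembles into $NG/\pi$ plus a correction, and careful bookkeeping of the constant and $\ln N$ terms should produce the overall factor $2^{-1/4}N\,e^{NG/\pi}$.

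The main obstacle will be controlling the error uniformly for $k$ in the range $[-N^{2/3},N^{2/3}]$ and proving the bound $|{\mathcal E}|<CN^{-1/3}$. The integrand $f(t)=\ln(2\sin(\pi t))$ has a logarithmic singularity at $t=0$, so the Euler--Maclaurin/Riemann-sum error near the lower endpoint $j=1$ must be handled separately from the smooth bulk; fortunately this near-zero contribution is the same for both sums $S(3L+k)$ and $S(3L-k)$ and is $k$-independent, so it folds into the constant $NG/\pi$ term rather than affecting the Gaussian. The delicate point is the quadratic-in-$k$ estimate: since $k$ can be as large as $N^{2/3}$, the quantity $\pi k^2/N$ can grow like $N^{1/3}$, so I must show that the \emph{relative} error in $\ln u_0(k)$ is $O(N^{-1/3})$, i.e. the error in the exponent is $o(1)$ uniformly, which requires expanding $\int f$ near $t=3/4$ to high enough order and bounding the cubic remainder by $O(k^3/N^2)=O(N^{-1/3})$ on the given range. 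I would carry out this Taylor expansion with explicit control on the third derivative of $\int f$, verify the cancellation of odd-order terms in $k$, and conclude by exponentiating that $u_0(k)=2^{-1/4}Ne^{NG/\pi-\pi k^2/N}(1+{\mathcal E})$ with the claimed uniform bound on ${\mathcal E}$.
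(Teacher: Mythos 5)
Your skeleton---taking logarithms of the product, comparing the sum with $N\int \ln(2\sin(\pi t))\,\mathrm{d}t$ via Euler--Maclaurin, extracting Catalan's constant through the Clausen integral, and getting the Gaussian from a symmetric Taylor expansion near $t=3/4$ (where $f'(3/4)=\pi\cot(3\pi/4)=-\pi$)---is indeed the skeleton of the paper's proof. But you omit the paper's decisive first move, and that omission leaves a genuine gap exactly where the content of the lemma sits. Before any asymptotics, the paper applies the reflection identity \eqref{sk_properties2}, $S(k)S(N-k-1)=N$, to write $u_0(k)=S(3L+k)S(3L-k)=N\,S(3L+k)/S(L+k)$, so that $\ln u_0(k)-\ln N=\sum_{j=L+k+1}^{3L+k}\ln(2\sin(\pi j/N))$ runs over indices bounded away from $0$ and $N$ for all $|k|\le N^{2/3}$. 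This buys two things simultaneously: the prefactor $N$ in \eqref{main_estimate} appears \emph{identically}, with no asymptotics at all, and the integrand is smooth on the whole summation range, so Euler--Maclaurin applies with remainder $O(N^{-1})$ and everything reduces to the expansion $\mathrm{Cl}_2(\pi/2+z)=\sum_{j\ge0}a_j z^j$ with $a_0=G$, $a_1=-\ln(2)/2$ (the source of $2^{-1/4}$) and $a_2=-1/4$ (the source of $-\pi k^2/N$). In your version the logarithmic singularity at $j=0$ is not a side issue to be ``folded into the constant'': the statement that the near-origin contribution is $k$-independent is true but useless until that constant is computed, and the constant is precisely where the difficulty lies---in particular the factor $N$ (a $\ln N$ term) must emerge from the singular-region analysis, which requires a Stirling-type asymptotic for the partial sine product $\sum_{j\le m}\ln(2\sin(\pi j/N))$ near the origin (comparing with $\sum_{j\le m}\ln(2\pi j/N)=m\ln(2\pi/N)+\ln m!$ and so on), with the resulting $\ln N$ and constant pieces cancelling exactly against the bulk terms. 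That computation is feasible but constitutes the bulk of the work, and your sketch defers it entirely. The cheapest repair is to invoke \eqref{sk_properties2} yourself, e.g.\ $S(3L-k)=N/S(L+k)$, which collapses your two singular sums into the paper's single smooth one.

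There is also a quantitative slip in your error budget: on the range $|k|\le N^{2/3}$ one has $k^3/N^2\le 1$, with equality at the endpoint, so the claimed bound $O(k^3/N^2)=O(N^{-1/3})$ is simply false. What actually saves the exponent is the cancellation you mention only in passing: in the symmetric combination of the upper limits $3L\pm k$ the odd powers of $k$ drop out exactly, so the first surviving remainder beyond the quadratic term is quartic, of size $O(k^4/N^3)$, and it is \emph{this} that equals $O(N^{-1/3})$---sharp at $k=\pm N^{2/3}$, which is why the lemma's error exponent is $N^{-1/3}$ and not better. The paper's computation makes this explicit: in the combination $(3/4-k)^j+(k-1/4)^j$ the $k^3$ terms cancel at $j=3$, the $j\ge4$ terms are bounded by $(4\pi)^4N^{-1/3}$ uniformly, and the surviving $k$-linear piece $\pi k/N=O(N^{-1/3})$ is absorbed into $\mathcal{E}$. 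If you rewrite your plan with the reflection identity up front and replace the cubic bound by the post-cancellation quartic bound, it becomes essentially the paper's proof.
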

\begin{proof}
First of all, we use property \eqref{sk_properties2}  and check that 
\begin{equation}\label{u_0_s3l_sL}
u_0(k)=S(3L+k)S(3L-k)=N \frac{S(3L+k)}{S(L+k)}. 
\end{equation}
Let us denote $f(x):=\ln(2 \sin(\pi x/N))$ for $0<x<N$. Applying the Euler-Maclaurin summation formula we obtain
\begin{align}\label{lemma_v0_1} 
\nonumber
\ln(S(3L+k)/S(L+k))=\sum\limits_{j=L+k+1}^{3L+k} f(j)&=\int_{L+k}^{3L+k} f(x) \d x+(f(3L+k)-f(L+k))/2
\\ 
&+(f'(3L+k)-f'(L+k))/12+{\mathcal R},
\end{align}
where the remainder term has an upper bound
$$
|{\mathcal R}|<\frac{1}{12} \int_{L+k}^{3L+k} |f''(x)| \d x.
$$
We compute the first term in the right-hand side in \eqref{lemma_v0_1}  
$$
I_1:=\int_{L+k}^{3L+k} f(x) \d x=-\frac{N}{2\pi}({\textrm{Cl}}_2(2 \pi (3L+k)/N)-{\textrm{Cl}}_2(2 \pi (L+k)/N)),
$$
where the Clausen function  is defined by
$$
{\textrm{Cl}}_2(t):=-\int_0^t \ln | 2 \sin(u/2)| \d u.
$$
The function ${\textrm{Cl}}_2(t)$ is odd and periodic with period $2\pi$ (see \cite[Chapter 4]{Lewin1981}), which gives us
$$
I_1=\frac{N}{2\pi} ({\textrm{Cl}}_2(2 \pi (L+1-k)/N)+{\textrm{Cl}}_2(2 \pi (L+k)/N)).
$$
The Clausen function can be written as Taylor series ${\textrm{Cl}}_2(\pi/2+z)=\sum_{j\ge 0} a_j z^j$, where $a_0=G$, $a_1=-\ln(2)/2$, $a_2=-1/4$ and the series converges if $|z|<\pi/2$. Using this result we obtain
\begin{align*}
I_1&=\sum\limits_{j\ge 0} a_j (2\pi/N)^{j-1} ((3/4-k)^j+(k-1/4)^j)=NG/\pi-\ln(2)/4-\frac{\pi}{2N} (2k^2-2k+5/8)\\
&+\left(\frac{\pi}{2N}\right)^2 (3k^2/2-3k/2+13/32)
+\sum\limits_{j\ge 4} a_j (2\pi/N)^{j-1} ((3/4-k)^j+(k-1/4)^j).
\end{align*}
Assuming that $|k|<N^{2/3}$ we have
\begin{align*}
I_1&=NG/\pi-\ln(2)/4-\pi k^2/N + O(N^{-1/3}) 
+\sum\limits_{j\ge 4} a_j (2\pi/N)^{j-1} ((3/4-k)^j+(k-1/4)^j).
\end{align*}
When $N>(4\pi)^3$, each term in the series in the right-hand side of the above formula can be estimated as follows
\begin{align*}
\big \vert (2\pi/N)^{j-1} ((3/4-k)^j+(k-1/4)^j) \big \vert &<2 (N^{2/3}+1)^j (2\pi)^{j-1} N^{1-j}
\\&<
(4\pi)^j N^{1-j/3}<(4\pi)^4 N^{-1/3},
\end{align*}
for all $j\ge 4$. 
Combining all the above two formulas and using the fact that the series $\sum_{j\ge 0} |a_j|$ converges we arrive at our final estimate 
of $I_1$, which is valid for all $N>(4\pi)^3$ and all $|k|<N^{2/3}$:
$$
I_1=NG/\pi-\ln(2)/4-\pi k^2/N +O(N^{-1/3}).
$$

Dealing with the remaining terms in the right-hand side of \eqref{lemma_v0_1}  is much easier. We check that the second term  satisfies
\begin{align*}
I_2:=(f(3L+k)-f(L+k))/2=\frac{1}{2} \ln\left(\frac{\sin(\frac{\pi}{4}+\pi(3/4-k)/N)}{\sin(\frac{\pi}{4}+\pi(k-1/4)/N)}\right)=
\frac{1}{2} \ln(1+O(N^{-1/3}))=O(N^{-1/3}) 
\end{align*}
Similarly, the third term can be estimated as
\begin{align*}
I_3:=(f'(3L+k)-f'(L+k))/12=\frac{\pi}{12 N} (\cot(\pi/4+\pi(3/4-k)/N)+\cot(\pi/4+\pi(k-1/4)/N))=O(N^{-1})
\end{align*}
Finally, the remainder can be bounded as follows
\begin{align*}
|{\mathcal R}|&<\frac{1}{12} \int_{L+k}^{3L+k} |f''(x)| \d x=\frac{\pi^2}{12 N^2} \int_{L+k}^{3L+k} \csc(\pi x/N)^2 \d x \\
&\le 
\frac{\pi^2}{12 N^2} \times (N/2) \times  \max \{ \csc(\pi x/N)^2 \; : \; N/4-N^{2/3} \le x \le 3N/4+N^{2/3} \}= O(N^{-1}). 
\end{align*}
Combining \eqref{u_0_s3l_sL} with \eqref{lemma_v0_1} and the above four asymptotic results ends the proof of \eqref{main_estimate}. 
\end{proof}

\vspace{0.25cm}
\noindent
{\bf Proof of Theorem \ref{thm_convergence}:}
We recall that $\epsilon=\sqrt{2\pi/N}$ and we define
$\eta:=2^{-1/4} N \exp(NG/\pi)$ and
$$
J_N:=\{ k \in {\mathbb Z} \; : \; |\epsilon k|<2N^{1/16}\}. 
$$

Our first goal is to prove the following upper bound: there exists an absolute constant $C_1>0$ such that 
\begin{equation}\label{estimate_un}
0\le u_n(k)<C_1 \eta \exp(- N^{1/8}), \;\; \textnormal{ for } \; |n| \le 2  \; 
\textnormal{ and } \; k \in I_N \setminus J_N. 
\end{equation}
In order to establish \eqref{estimate_un} we note that 
$$
u_n(k+1)=\frac{\sin(\pi(L-n-k)/N)}{\sin(\pi (L-n+k+1))} u_n(k),
$$ 
which shows that $u_n(k+1)\le u_n(k)$ for $k\ge 0$. By symmetry we also have $u_n(k-1)\le u_n(k)$ for $k\le 0$. 
Lemma \ref{lemma_v0}  implies that when $\epsilon k$ is close to $2N^{1/16}$ then 
$u_0(k)<2\eta \exp(-2N^{1/8})$. Since $u_n(k)$ decreases as $|k|$ increases,  this bound 
holds true for all $k \in I_N \setminus J_N$, and we have proved \eqref{estimate_un} for $n=0$. 
For other values of $n$ we use the same method coupled with the identity
\begin{equation}\label{identity_u_n_u_n1}
u_{n+1}(k)=2 \left(\cos(2\pi k /N)-\cos(2\pi (L-n)/N)\right) u_n(k), 
\end{equation}
which follows at once from \eqref{u_n_product_formula}. 

Next, we introduce $t_k:=2\sin(\epsilon^2 k/2)/\epsilon$.  Note that as $\epsilon\to 0$ we have 
$t_k=\epsilon k + O(\epsilon^5 k^3)$. 
 Using the double-angle formula for the cosine function  we rewrite
\eqref{identity_u_n_u_n1} in the form
\begin{align}\label{identity_u_n_u_n1_form2}
\nonumber
u_{n+1}(k)&=(2-2\sin(\epsilon^2 (n+1/4))-\epsilon^2 t_k^2) u_n(k)\\ 
&=(2-\epsilon^2 (t_k^2+2n+1/2)+ \epsilon^6 (n+1/4)^3/3+O(\epsilon^{10}) ) u_n(k). 
\end{align}
We emphasize that the implied constant in the error term $O(\epsilon^{10})$ does not depend on $k$. 
Applying \eqref{identity_u_n_u_n1_form2} we obtain 
\begin{align}
\label{eqn_u0_u1}
u_0(k)&=(2- \epsilon^2 (t_k^2-3/2)+O(\epsilon^6))u_{-1}(k), \\ \nonumber
u_1(k)&=(4-\epsilon^2 (4t_k^2-2)+\epsilon^4(t_k^4-t_k^2-3/4)+O(\epsilon^6))u_{-1}(k). 
\end{align}
The identity $S(2L)=\sqrt{N}$ coupled with \eqref{def_sk} give us
\begin{align}\label{asymptotics_S2L_1}
N^{-1/2} S(2L+1)&=2 \sin(\pi (2L+1)/N)=2 \cos(\epsilon^2/4)=2+O(\epsilon^4),\\ \nonumber
N^{-1/2} S(2L+2)&=4 \sin(\pi (2L+1)/N)\sin(\pi (2L+2)/N)=4 \cos(\epsilon^2/4)\cos(3\epsilon^2/4)=4-5\epsilon^4/4 +O(\epsilon^8).
\end{align}
Using formulas \eqref{eqn_u0_u1} and \eqref{asymptotics_S2L_1} and the fact that $v_n(k)=\sin(2\pi k /N) u_n(k)$ we arrive at the following result
\begin{align}\label{wxyz0}
\nonumber
w_0(k)&=2u_0(k)=(4- \epsilon^2 (2t_k^2-3)+O(\epsilon^6))u_{-1}(k), \\ \nonumber
x_0(k)&=v_0(k)+N^{-1/2} S(2L+1) v_{-1}(k)=(4+\epsilon^2 (3/2 - t_k^2) + O(\epsilon^4)) \sin(2\pi k /N)u_{-1}(k), \\
y_0(k)&=-u_1(k)+N^{-1/2} S(2L+2) u_{-1}(k)=(\epsilon^2 (4t_k^2-2)-\epsilon^4 (t_k^4-t_k^2+1/2)+O(\epsilon^6)) u_{-1}(k), \\
\nonumber
z_0(k)&=-v_0(k)+N^{-1/2} S(2L+1) v_{-1}(k)=(\epsilon^2 (t_k^2-3/2)+O(\epsilon^4)) \sin(2\pi k/N) u_{-1}(k). 
\end{align}
Again, in all of the above formulas the implied constants in the error terms $O(\epsilon^{j})$ do not depend on $k$. Now we restrict 
$k$ to the interval $J_N$. For $k\in J_N$ we have 
$|t_k|<4N^{1/16}$, thus $\epsilon t_k^4=O(N^{-1/4})$.  We also check that 
$\sin(2\pi k/N)=\epsilon t_k +O((\epsilon t_k)^3))$ and 
\begin{equation}\label{eqn_u_minus1}
u_{-1}(k)=\frac{\eta}{2} \exp(- (\epsilon k)^2/2)(1+O(\epsilon^{2/3})), \;\;\;  k \in J_N, 
\end{equation}
which follows from \eqref{identity_u_n_u_n1_form2} and 
Lemma \ref{lemma_v0}. Combining all of the above results with the following formulas for Hermite polynomials
$$
H_0(x)=1, \;\;\; H_1(x)=2x, \;\;\; H_2(x)=4x^2-2, \;\;\; H_3(x)=8x^3-12x,
$$
we conclude that for $k \in J_N$ 
\begin{align}\label{eqn_wxyz0_H}
\nonumber
w_0(k)&=2 \eta H_0(t_k) \exp(- (\epsilon k)^2/2) (1+O(\epsilon^{2/3})), \\ \nonumber
x_0(k)&=\epsilon \eta H_1(t_k) \exp(- (\epsilon k)^2/2) (1+O(\epsilon^{2/3})), \\
y_0(k)&=\epsilon^2 (\eta/2) H_2(t_k) \exp(- (\epsilon k)^2/2) (1+O(\epsilon^{2/3})), \\ \nonumber
z_0(k)&=\epsilon^3 (\eta/16) H_3(t_k)  \exp(- (\epsilon k)^2/2) (1+O(\epsilon^{2/3})). 
\end{align}
Let us consider, for example, vector ${\mathbf z}_0$. We use the above result and and the estimate \eqref{estimate_un}
to conclude 
\begin{align*} 
256 \epsilon^{-5} \eta^{-2} \| {\mathbf z}_0\|^2&= 256 \epsilon^{-5} \eta^{-2} \sum\limits_{k \in I_N\setminus J_N} 
z_0(k)^2 + 
\epsilon \sum\limits_{k \in J_N} H_3(t_k)  \exp(- (\epsilon k)^2/2) (1+O(\epsilon^{2/3}))
\\&=
O(N^{7/2} \exp(-2N^{1/8}))+\epsilon \sum\limits_{k \in J_N} H_3(t_k)  \exp(- (\epsilon k)^2/2) (1+O(\epsilon^{2/3})).
\end{align*}
Thus, as $N\to +\infty$ we have
\begin{equation}\label{eqn_norm_z0}
256 \epsilon^{-5} \eta^{-2} \| {\mathbf z}_0\|^2 \to \int_{\r} H_3(x)^2 e^{-x^2} \d x=48 \sqrt{\pi}. 
\end{equation}
Recall that ${\boldsymbol \phi}_3={\textbf z_0}/\|{\mathbf z}_0\|$. Then \eqref{eqn_norm_z0} combined with the last formula in \eqref{eqn_wxyz0_H} show that for all $k\in J_N$ we have 
$$
\epsilon^{-1/2}{\boldsymbol \phi}_3(k)= \psi_3(\epsilon k)(1+o(1)). 
$$
In the range $k\in I_N \setminus J_N$ both the left hand side and the right-hand side are bounded by $O(N^4 \exp(-N^{1/8}))$ 
(see \eqref{estimate_un}), thus as $N\to +\infty$ both of these expressions converge to zero, uniformly in
$k\in I_N \setminus J_N$.
This concludes the proof of the convergence result \eqref{convergence_phi_psi} for $n=3$. The proof 
in the cases $n\in \{0,1,2\}$ follows exactly the same steps.

The proof for the remaining cases $n \in \{4,5,6,7\}$ will require the following simple fact, whose proof is left to the reader. 

\vspace{0.2cm}
\noindent
{\bf Fact:}
{\it 
Assume that the orthonormal vectors ${\mathbf c}$ and 
${\mathbf d}$ are obtained from two linearly independent vectors ${\mathbf a}$ and ${\mathbf b}$ by Gramm-Schmidt orthogonalization 
\begin{equation}\label{Gramm_Schmidt}
{\mathbf c}=\frac{{\mathbf a}}{\|{\mathbf a}\|}, \;\;\; \tilde {\mathbf d}={\mathbf b}-\frac{{\mathbf b} \cdot {\mathbf a}}{\|{\mathbf a}\|^2} {\mathbf a}, \;\;\; {\mathbf d}=\frac{\tilde {\mathbf d}}{\|\tilde {\mathbf d}\|}. 
\end{equation}
Then the vectors ${\mathbf c}$ and ${\mathbf d}$ would be the same if instead of ${\mathbf a}$ 
and ${\mathbf b}$ we started with $\hat {\mathbf a}=\beta_1 {\mathbf a}$ and 
$\hat {\mathbf b}=\beta_2 {\mathbf b} + \beta_3 {\mathbf a}$ for some $\beta_1>0$ and $\beta_2>0$.} 
\vspace{0.15cm}

Now, armed with this result, let us prove the convergence result \eqref{convergence_phi_psi} for $n=4$. 
According to the Definition \ref{def_phi}, the vectors ${\boldsymbol \phi}_0$ and 
${\boldsymbol \phi}_4$ are obtained by the Gramm-Schmidt orthogonalization \eqref{Gramm_Schmidt} starting from vectors 
${\textbf w}_0$ and ${\textbf w}_1$. According to the above fact, we will obtain the same result if we start with vectors  
\begin{equation}\label{def_hat_w}
\hat {\mathbf w}_0:=(2\eta)^{-1} {\mathbf w}_0, \;\;
\textnormal{ and } \; \hat {\mathbf w}_1:=(32/\eta) \epsilon^{-4} \left( {\mathbf w}_1-(2-\epsilon^2+\epsilon^4/16) {\mathbf w}_0\right).
\end{equation} 
It is clear from \eqref{eqn_wxyz0_H} that 
\begin{equation}\label{eqn_hat_w_0}
\hat w_0(k)= H_0(t_k) \exp(- (\epsilon k)^2/2) (1+O(\epsilon^{2/3}))
\end{equation}
for $k\in J_N$. Using \eqref{eqn_u0_u1} and \eqref{asymptotics_S2L_1} we compute
$$
w_1(k)=u_1(k)+N^{-1/2} S(2L+2) u_{-1}(k)=(8-\epsilon^2(4t(k)^2-2)+\epsilon^4(t(k)^2-t(k)^2-2)+O(\epsilon^6)) u_{-1}(k).
$$
Combining this result  with \eqref{wxyz0}, \eqref{eqn_u_minus1} and the following expression for the Hermite polynomial 
$$
H_4(x)=16x^4-48x^2+12,
$$
we find 
\begin{equation}
\hat  w_1(k)=(2/\eta) \times (16t_k^4-48t_k^2+12+{\mathcal E})u_{-1}(k)=
(H_4(t_k)+{\mathcal E}) \exp(- (\epsilon k)^2/2) (1+O(\epsilon^{2/3})),
\end{equation}
where the error term satisfies ${\mathcal E} = O(\epsilon^2 (1+t_k^4))$. For $k \in J_N$  we 
can estimate ${\mathcal E}=O(\epsilon)$. Thus we have proved that for 
all $k \in J_N$ 
\begin{equation}\label{eqn_hat_w_1}
\hat  w_1(k)=H_4(t_k)\exp(- (\epsilon k)^2/2) (1+O(\epsilon^{2/3})). 
\end{equation}
It is clear from \eqref{estimate_un} and \eqref{def_hat_w}
that for $k \in I_N \setminus J_N$ we have an upper bound $|w_1(k)|=O(N^2 \exp(-N^{1/8})$, thus $w_1(k)$ converges to zero 
as $N\to +\infty$ uniformly in $k \in I_N \setminus J_N$. 

Now, the vector ${\boldsymbol \phi}_4$ is given by ${\boldsymbol \phi}_4={\mathbf e}/\|{\mathbf e}\|$, where we set 
$$
{\mathbf e}:=\hat {\mathbf w}_1 - \frac{\hat {\mathbf w}_1 \cdot \hat {\mathbf w}_0}{\|\hat {\mathbf w}_0\|^2} \hat {\mathbf w}_0. 
$$
Applying \eqref{eqn_hat_w_0}, \eqref{eqn_hat_w_1}  we conclude (in the same way as we did above for 
$\|{\mathbf z}_0\|^2$) that as $N\to +\infty$
\begin{align*}
&\epsilon\|\hat {\mathbf w}_0\|^2=\epsilon \sum_{k \in I_N} \hat w_0(k)^2 \to
\int\limits_{\r} \exp(- x^2) \d x=\sqrt{\pi}, \\
&\epsilon\hat {\mathbf w}_1 \cdot \hat {\mathbf w}_0=\epsilon \sum_{k \in I_N} \hat w_0(k) \hat w_1(k) \to
\int\limits_{\r} H_4(x) \exp(- x^2) \d x=0. 
\end{align*}
Thus the quantity $\hat {\mathbf w}_1 \cdot \hat {\mathbf w}_0/\|\hat {\mathbf w}_0\|^2$ converges to zero as $N \to +\infty$, 
which shows that  $e(k) \to \hat  w_1(k)$ as $N\to +\infty$ (uniformly in $k \in I_N$). This result, combined with 
\eqref{eqn_hat_w_1} proves that the convergence result \eqref{convergence_phi_psi} holds true for $n=4$. 

The proof of \eqref{convergence_phi_psi}  for $n\in \{5,6,7\}$ follows the same steps, though the computations become 
more tedious (the use of a symbolic computation package is highly recommended for checking these expressions).  For example, when $n=7$ we use \eqref{def_wxyz} and \eqref{identity_u_n_u_n1_form2} to find 
\begin{align*}
z_0(k)&=(\epsilon^2 (2t_k^2-3)-\epsilon^4(t_k^4-5t_k^2+43/8)+\epsilon^6(t^2+1)/16+O(\epsilon^8)) \sin(2 \pi k /N) u_{-2}(k), \\
z_1(k)&=(\epsilon^2 (12 t_k^2-18)-\epsilon^4(6t_k^4-18t_k^2+57/4)+\epsilon^6(t_k^6-9t_k^4/2+11t_k^2/4+165/4)+O(\epsilon^8))
\sin(2 \pi k /N) u_{-2}(k). 
\end{align*}
We define  $\hat {\mathbf z}_1:=(512/\eta) \epsilon^{-7}({\mathbf z}_1-(6-6\epsilon^2+49\epsilon^4/16){\mathbf z}_0)$ and
 check that 
\begin{align*}
\hat z_1(k)=(4/\eta) \epsilon^{-1} (128t_k^6-1344t_k^4+3360t_k^2-1680+{\mathcal E})\sin(2 \pi k /N) u_{-2}(k) \\
= (H_7(t_k)+{\mathcal E})\exp(- (\epsilon k)^2/2) \cos(\pi k /N) (1+O(\epsilon^{2/3})),
\end{align*}
where $|{\mathcal E}|<\epsilon^2 (1+t_k^8)$. Starting with this result and 
following exactly the same argument as we have used in the case $n=4$ one can show that the convergence result 
\eqref{convergence_phi_psi}  holds true for $n=7$. The details and the proof of the remaining cases $n\in \{5,6\}$ are left to the reader.

%


\begin{appendices}

    \setcounter{proposition}{0}
    \renewcommand{\theproposition}{\Alph{section}\arabic{proposition}}
    \setcounter{theorem}{0}
    \renewcommand{\thetheorem}{\Alph{section}\arabic{theorem}}

\section{}


In this section we present the analogues of Theorem \ref{theorem_4N_plus1_n1} and Corollary \ref{prop_4N_plus1_n2} for the remaining cases $ N \in \{0, 2, 3\} \,({\textnormal{mod }}4)$. In each case
we give formulas for the even and odd vectors ${\mathbf u}_n$ and ${\mathbf v}_n$ (which were described in Theorem \ref{thm_support}) and compute their corresponding discrete Fourier transforms. 
The proof of these results is based on Lemma \ref{main_lemma} and it is very similar to the proof of Theorem \ref{theorem_4N_plus1_n1}, we omit all the details.  We construct a basis for each eigenspace $W$, $X$, $Y$ and $Z$, from which one can 
build the Hermite-type eigenvectors ${\boldsymbol \phi}_n$  in exactly the same way as in Definition \ref{def_phi}. 

We recall that the interval $I_N$ and the discrete Fourier transform  ${\mathcal F}$ are defined by \eqref{def_Fourier_transform} and 
the sequence $S(k)$ is defined by \eqref{def_sk}.

\subsection{The case when $N=4L$}

We define $2L+1$ vectors $\{{\mathbf u}_n\}_{-L \le n \le L}$ and $2L-1$ vectors $\{{\mathbf v}_n\}_{-L+1 \le n \le L-1}$ as follows: 
for $-2L+1\le k \le 2L$ we set $u_{-L}(k):=2L$ and 
\begin{align*}
u_n(k)&:=\cos(\pi k/N) S(3L+n-1+k)S(3L+n-1-k), \;\;\; -L+1\le n \le L, \\
v_n(k)&:=\sin(\pi k/N) S(3L+n-1+k)S(3L+n-1-k), \;\;\; -L+1 \le n \le L-1. 
\end{align*}
These $N$ vectors are linearly independent and they satisfy 
\begin{align*}
{\mathcal F} {\mathbf u}_n&=N^{-1/2}\frac{S(2L+2n)}{2\sin(\pi(3L+n)/N)} {\mathbf u}_{-n}, \;\;\; -L \le n < L-1, \\
{\mathcal F} {\mathbf v}_n&=-\i N^{-1/2} \frac{S(2L+2n)}{2\sin(\pi(3L-n)/N)} {\mathbf v}_{-n}, \;\;\; -L+1 \le n \le L-1, 
\end{align*}
and ${\mathcal F}{\mathbf u}_L=2\sqrt{N} {\mathbf u}_{-L}$. The vectors 
$\{{\mathbf u}_n\}_{-L+1 \le n \le L+1}$ and $\{{\mathbf v}_n\}_{-L+1 \le n \le L-1}$ 
are precisely the vectors described in Theorem \ref{thm_support}(ii). 
An equivalent expression for the vectors ${\mathbf u}_n$ is given by
\begin{equation*}
u_n(k)=N2^{L+n} \cos(\pi k/N)^2 \prod\limits_{j=L-n+1}^{2L-1} \left( \cos(2\pi k/N)-\cos(2\pi j/N)\right). 
\end{equation*}

Define $N$ vectors $\{{\mathbf w}_n\}_{0\le n \le L}$, $\{{\mathbf x}_n\}_{0\le n \le L-1}$, 
$\{{\mathbf y}_n\}_{0\le n \le L-1}$, $\{{\mathbf z}_n\}_{0\le n \le L-2}$ as follows:
\begin{align*}
&{\mathbf w}_n={\mathbf u}_n+N^{-1/2}\frac{S(2L+2n)}{2\sin(\pi(3L+n)/N)} {\mathbf u}_{-n}\;, \;\; 0 \le n \le L-1,\\
&{\mathbf w}_L={\mathbf u}_L+2N^{1/2} {\mathbf u}_{-L}, \\
&{\mathbf x}_n={\mathbf v}_n+N^{-1/2}\frac{S(2L+2n)}{2\sin(\pi(3L-n)/N)} {\mathbf v}_{-n}\;, \;\; 0 \le n \le L-1,\\
&{\mathbf y}_n=-{\mathbf u}_{n+1}+N^{-1/2}\frac{S(2L+2n+2)}{2\sin(\pi(3L+n+1)/N)} {\mathbf u}_{-n-1}\;, \;\; 0 \le n \le L-2, \\
&{\mathbf y}_{L-1}=-{\mathbf u}_L+2N^{1/2} {\mathbf u}_{-L},\\
&{\mathbf z}_n=-{\mathbf v}_{n+1}+N^{-1/2}\frac{S(2L+2n+2)}{2\sin(\pi(3L-n-1)/N)} {\mathbf v}_{-n-1}\;, \;\; 0 \le n \le L-2.
\end{align*}
The vectors $\{{\mathbf w}_n\}_{0\le n \le L}$ (respectively, $\{{\mathbf x}_n\}_{0\le n \le L-1}$, 
$\{{\mathbf y}_n\}_{0\le n \le L-1}$ and $\{{\mathbf z}_n\}_{0\le n \le L-2}$)
form a basis for the eigenspace $W$ (respectively, $X$, $Y$ and $Z$).

\subsection{The case when $N=4L+2$}

We define $2L+2$ vectors $\{{\mathbf u}_n\}_{-L \le n \le L+1}$ and $2L$ vectors $\{{\mathbf v}_n\}_{-L+1 \le n \le L}$ as follows: 
for $-2L\le k \le 2L+1$ we set $u_{-L}(k):=2L+1$ and 
\begin{align*}
u_n(k)&:=\cos(\pi k/N) S(3L+n+k)S(3L+n-k), \;\;\; -L+1\le n \le L+1, \\
v_n(k)&:=\sin(\pi k/N) S(3L+n+k)S(3L+n-k), \;\;\; -L+1 \le n \le L.
\end{align*}
These $N$ vectors are linearly independent and they satisfy 
\begin{align*}
{\mathcal F} {\mathbf u}_n&=N^{-1/2}\frac{S(2L+2n)}{2\sin(\pi(3L+n+1)/N)} {\mathbf u}_{1-n}, \;\;\; -L \le n \le L, \\ \\
{\mathcal F} {\mathbf v}_n&=-\i N^{-1/2}\frac{S(2L+2n)}{2\sin(\pi(3L-n+2)/N)} {\mathbf v}_{1-n}, \;\;\; -L+1 \le n \le L,
\end{align*}
and ${\mathcal F}{\mathbf u}_{L+1}=2N^{1/2} {\mathbf u}_{-L}$.  The vectors 
$\{{\mathbf u}_n\}_{-L+1 \le n \le L}$ and $\{{\mathbf v}_n\}_{-L+1 \le n \le L}$ 
are precisely the vectors described in Theorem \ref{thm_support}(ii). 
An equivalent expression for the vectors ${\mathbf u}_n$ is given by
\begin{equation*}
u_n(k)=N2^{L+n} \cos(\pi k/N)^2 \prod\limits_{j=L-n+2}^{2L} \left( \cos(2\pi k/N)-\cos(2\pi j/N)\right).
\end{equation*}

Define $N$ vectors $\{{\mathbf w}_n\}_{0\le n \le L}$, $\{{\mathbf x}_n\}_{0\le n \le L-1}$, 
$\{{\mathbf y}_n\}_{0\le n \le L}$, $\{{\mathbf z}_n\}_{0\le n \le L-1}$ as follows:
\begin{align*}
&{\mathbf w}_n={\mathbf u}_{n+1}+N^{-1/2}\frac{S(2L+2n+2)}{2\sin(\pi(3L+n+2)/N)} {\mathbf u}_{-n}\;, \;\; 0 \le n \le L-1,\\
&{\mathbf w}_L={\mathbf u}_{L+1}+2N^{-1/2} {\mathbf u}_{-L}, \\
&{\mathbf x}_n={\mathbf v}_{n+1}+N^{-1/2}\frac{S(2L+2n+2)}{2\sin(\pi(3L-n+1)/N)} {\mathbf v}_{-n}\;, \;\; 0 \le n \le L-1,\\
&{\mathbf y}_n=-{\mathbf u}_{n+1}+N^{-1/2}\frac{S(2L+2n+2)}{2\sin(\pi(3L+n+2)/N)} {\mathbf u}_{-n}\;, \;\; 0 \le n \le L-1, \\
&{\mathbf y}_L=-{\mathbf u}_{L+1}+2N^{-1/2} {\mathbf u}_{-L}, \\
&{\mathbf z}_n=-{\mathbf v}_{n+1}+N^{-1/2}\frac{S(2L+2n+2)}{2\sin(\pi(3L-n+1)/N)} {\mathbf v}_{-n}\;, \;\; 0 \le n \le L-1.
\end{align*}
The vectors $\{{\mathbf w}_n\}_{0\le n \le L}$ (respectively, $\{{\mathbf x}_n\}_{0\le n \le L-1}$, 
$\{{\mathbf y}_n\}_{0\le n \le L}$ and $\{{\mathbf z}_n\}_{0\le n \le L-1}$)
form a basis for the eigenspace $W$ (respectively, $X$, $Y$ and $Z$).

\subsection{The case when $N=4L+3$}

We define $2L+2$ vectors $\{{\mathbf u}_n\}_{-L \le n \le L+1}$ and $2L+1$ vectors $\{{\mathbf v}_n\}_{-L \le n \le L}$ as follows: 
for $-2L-1 \le k \le 2L+1$ we set
\begin{align*}
u_n(k)&:= S(3L+n+1+k)S(3L+n+1-k), \;\;\; -L \le n \le L+1, \\
v_n(k)&:=\sin(2\pi k/N) S(3L+n+1+k)S(3L+n+1-k), \;\;\; -L \le n \le L. 
\end{align*}
These $N$ vectors are linearly independent and they satisfy 
\begin{align*}
{\mathcal F} {\mathbf u}_n&=N^{-1/2}S(2L+2n) {\mathbf u}_{1-n}, \;\;\; -L \le n \le L+1 \\
{\mathcal F} {\mathbf v}_n&=-\i N^{-1/2}S(2L+2n+1) {\mathbf v}_{-n}, \;\;\; -L \le n \le L. 
\end{align*}
The vectors 
$\mathbf u_n$ and $\mathbf v_n$ 
are precisely the vectors described in Theorem \ref{thm_support}(i). 
An equivalent expression for the vectors ${\mathbf u}_n$ is given by
\begin{equation}
u_n(k)=N2^{L+n} \prod\limits_{j=L-n+2}^{2L} \left( \cos(2\pi k/N)-\cos(2\pi j/N)\right).
\end{equation}

Define $N$ vectors $\{{\mathbf w}_n\}_{0\le n \le L}$, $\{{\mathbf x}_n\}_{0\le n \le L}$, 
$\{{\mathbf y}_n\}_{0\le n \le L}$, $\{{\mathbf z}_n\}_{0\le n \le L-1}$ as follows:
\begin{align*}
&{\mathbf w}_n={\mathbf u}_{n+1}+N^{-1/2}S(2L+2n+2) {\mathbf u}_{-n}\;, \;\; 0 \le n \le L,\\
&{\mathbf x}_n={\mathbf v}_n+N^{-1/2}S(2L+2n+1) {\mathbf v}_{-n}\;, \;\; 0 \le n \le L,\\
&{\mathbf y}_n=-{\mathbf u}_{n+1}+N^{-1/2}S(2L+2n+2) {\mathbf u}_{-n}\;, \;\; 0 \le n \le L, \\\
&{\mathbf z}_n=-{\mathbf v}_{n+1}+N^{-1/2}S(2L+2n+3) {\mathbf v}_{-n-1}\;, \;\; 0\le n \le L-1.
\end{align*}
The vectors $\{{\mathbf w}_n\}_{0\le n \le L}$ (respectively, $\{{\mathbf x}_n\}_{0\le n \le L}$, 
$\{{\mathbf y}_n\}_{0\le n \le L}$ and $\{{\mathbf z}_n\}_{0\le n \le L-1}$)
form a basis for the eigenspace $W$ (respectively, $X$, $Y$ and $Z$). 

\end{appendices}


\end{document}